\documentclass{amsart}
\usepackage[utf8]{inputenc}
\usepackage[T1]{fontenc}
\usepackage{lmodern}

\usepackage{amssymb}
\usepackage{amsfonts}
\usepackage{stackrel}

\usepackage[lite]{amsrefs}

\renewcommand*{\PrintDOI}[1]{\href{http://dx.doi.org/\detokenize{#1}}{doi: \detokenize{#1}}}

\BibSpec{thesis}{%
  +{}  {\PrintAuthors}                {author}
  +{,} { \textit}                     {title}
  +{:} { \textit}                     {subtitle}
  +{,} { \PrintThesisType}            {type}
  +{,} { }                            {organization}
  +{,} { }                            {address}
  +{,} { \PrintDateB}                 {date}
  +{,} { \eprint}                     {eprint}
  +{,} { }                            {status}
  +{}  { \parenthesize}               {language}
  +{}  { \PrintTranslation}           {translation}
  +{;} { \PrintReprint}               {reprint}
  +{.} { }                            {note}
  +{.} {}                             {transition}
  +{} { \PrintDOI}                   {doi}
  +{}  {\SentenceSpace \PrintReviews} {review}
}

\usepackage{mathtools}
\usepackage{enumitem}
\usepackage{microtype}
\usepackage{tikz-cd}

\usepackage[pdftitle={A universal coefficient theorem for actions of
  finite cyclic groups of square-free order on C*-algebras},
pdfauthor={Ralf Meyer, George Nadareishvili},
pdfsubject={Mathematics}
]{hyperref}

\setlist[enumerate,1]{label=\textup{(\arabic*)}}
\setlist[enumerate,2]{label=\textup{(\alph*)}}

\numberwithin{equation}{section}

\newtheorem{theorem}{Theorem}[section]
\newtheorem{lemma}[theorem]{Lemma}
\newtheorem{proposition}[theorem]{Proposition}
\newtheorem{corollary}[theorem]{Corollary}

\theoremstyle{definition}
\newtheorem{definition}[theorem]{Definition}

\theoremstyle{remark}
\newtheorem{remark}[theorem]{Remark}
\newtheorem{example}[theorem]{Example}

\newcommand*{\Cst}{\textup C^*}
\newcommand*{\Cont}{\textup C}
\newcommand*{\nb}{\nobreakdash}
\newcommand{\into}{\rightarrowtail}
\newcommand{\prto}{\twoheadrightarrow}

\newcommand{\op}{\mathrm{op}}
\newcommand{\ext}{\mathrm{ext}}
\newcommand{\C}{\mathbb{C}}
\newcommand{\Q}{\mathbb{Q}}
\newcommand{\R}{\mathbb{R}}
\newcommand{\K}{\mathrm{K}}
\newcommand{\Z}{\mathbb{Z}}
\newcommand{\KK}{\mathrm{KK}}
\newcommand{\ideal}{\mathfrak{I}}

\DeclareMathOperator{\im}{im} 
\DeclareMathOperator{\Hom}{Hom} 
\DeclareMathOperator{\Ext}{Ext} 
\DeclarePairedDelimiter{\abs}{\lvert}{\rvert}
\DeclarePairedDelimiterX{\setgiven}[2]{\{}{\}}{#1\,{:}\,\mathopen{}#2}

\newcommand{\Ga}[2]{\alpha_{#1#2}}
\newcommand{\Gt}[1]{t_{#1}}
\newcommand{\Gs}[1]{s_{#1}}
\newcommand{\Gu}[1]{1_{#1}}

\newcommand{\Kring}{\mathfrak{K}}
\newcommand{\MG}{Q}
\newcommand{\DmG}{Q}
\newcommand{\Mp}{M}

\newcommand{\Mack}{\mathcal M}

\newcommand*{\defeq}{\mathrel{:=}}

\DeclareMathOperator{\Res}{Res}
\DeclareMathOperator{\Ind}{Ind}

\title[A UCT for actions of finite cyclic groups of square-free order]{A
  universal coefficient theorem for actions of finite cyclic groups of
  square-free order on $\Cst$\nb-algebras}

\author{Ralf Meyer}
\email{rmeyer2@uni-goettingen.de}
\address{Mathematisches Institut\\
  Universit\"at G\"ottingen\\Bunsenstra\ss e 3--5\\
  37073 G\"ottingen\\Germany}

\author{George Nadareishvili}
\email{giorgi.nadareishvili@kiu.edu.ge}
\address{School of Mathematics\\
Kutaisi International University\\Akhalgazrdoba Ave.\ Lane 5/7\\
4600 Kutaisi\\Georgia
}
\thanks{George Nadareishvili was supported by Shota Rustaveli National Science Foundation of Georgia, FR-25-10267.}

\subjclass[2020]{Primary 19K35; secondary 14L35}
\keywords{Universal Coefficient Theorem; C*-algebra classification;
  Kirchberg algebra; equivariant K-theory.}

\begin{document}

\begin{abstract}
  We prove a Universal Coefficient Theorem for objects in the
  bootstrap class in the equivariant Kasparov category for a finite
  cyclic group of square-free order.
\end{abstract}

\maketitle

\section{Introduction}

The classical Universal Coefficient Theorem (UCT) by Rosenberg and
Schochet~\cite{Rosenberg-Schochet:Kunneth} is a fundamental tool in
the study of \(\Cst\)\nb-algebras.
The theorem uses a short exact sequence to compute bivariant
\(\K\)\nb-theory groups of separable \(\Cst\)\nb-algebras in the
bootstrap class in terms of their \(\K\)\nb-theory.
It implies that two \(\Cst\)\nb-algebras in the bootstrap class are
\(\KK\)-equivalent once they have isomorphic \(\K\)\nb-theory.
This is a step towards the classification of Kirchberg algebras.

The UCT by Rosenberg--Schochet has been generalised to various
situations involving \(\Cst\)\nb-algebras with extra structure.
Meyer and Nest~\cite{Meyer-Nest:Filtrated_K} established a UCT for
\(\Cst\)\nb-algebras over a finite sober topological space~\(X\) using
filtrated \(\K\)\nb-theory.
This led to new classification results as well.
In the equivariant setting, Manuel K\"ohler~\cite{Koehler:Thesis}
established a remarkable UCT for \(\Cst\)\nb-algebras with an action
of a cyclic group of prime order.
This result has since been applied by
Meyer~\cite{Meyer:Actions_Kirchberg} to the classification of actions
of~\(\Z/p\) on Kirchberg algebras up to equivariant
\(\KK\)\nb-equivalence.

The aim of the present article is a generalisation of K\"ohler's UCT
for actions of finite cyclic groups~\(G\) of square-free order.  We
rely on the fact that the equivariant bootstrap
class~\(\mathfrak{B}^G\), which is defined to consist of all
\(G\)\nb-\(\Cst\)\nb-algebras that are \(\KK^G\)\nb-equivalent to an
action on a Type~I \(\Cst\)\nb-algebra, is generated by \(\Cont(G/H)\)
for cyclic subgroups \(H \subseteq G\).  This was recently shown
in~\cite{Meyer-Nadareishvili:UCT_actions}, using results
from~\cite{Arano-Kubota:Atiyah-Segal}.

To generalise K\"ohler's invariant, we use a larger generating
set~\(\mathfrak{C}^G\) for~\(\mathfrak{B}^G\), which consists of the
tensor products of K\"ohler's generators for the prime\nb-order
subgroups.  We prove that the resulting \(\Z/2\)-graded
category ring, \(\Kring_G\), is isomorphic to the tensor product of
individual rings~\(\Kring_{p_i}\) computed by K\"ohler.  Our main
result is:

\begin{theorem}
  \label{the:Eqv_UCT}
  Let~\(G\) be a finite cyclic group of square-free order.
  Let \(\mathfrak{B}^G\subseteq \KK^G\) be the equivariant bootstrap
  class.
  There is a stable homological functor~\(U_{\mathfrak{C}^G}\) from
  \(\KK^G\) to the category of \(\Z/2\)-graded exact countable modules
  over a certain ring~\(\Kring_G\), such that for any
  \(A\in\mathfrak{B}^G\) and \(C\in \KK^G\), there is a natural short
  exact sequence
  \[
    \Ext^1_{\Kring_G}\bigl(U_{\mathfrak{C}^G}(\Sigma A),
    U_{\mathfrak{C}^G}(C)\bigr) \into
    \KK^G_*(A,C) \prto
    \Hom_{\Kring_G}\bigl(U_{\mathfrak{C}^G}(A),
    U_{\mathfrak{C}^G}(C)\bigr).
  \]
  If \(C\in\mathfrak{B}^G\) as well, then every isomorphism
  \(U_{\mathfrak{C}^G}(A) \cong U_{\mathfrak{C}^G}(C)\) of
  \(\Z/2\)\nb-graded \(\Kring_G\)\nb-modules lifts to a
  \(\KK^G\)-equivalence in \(\KK^G_0(A,C)\).  In particular, \(A\)
  and~\(C\) are \(\KK^G\)-equivalent if and only if
  \(U_{\mathfrak{C}^G}(A) \cong U_{\mathfrak{C}^G}(C)\) as
  \(\Z/2\)\nb-graded \(\Kring_G\)\nb-modules.
\end{theorem}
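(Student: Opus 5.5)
We follow the general machinery of homological algebra in triangulated categories (Meyer--Nest), as in K\"ohler's proof for \(\Z/p\).

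\emph{Setup.} Write \(G=\Z/p_1\times\dots\times\Z/p_n\) with distinct primes. For each \(p_i\) let \(\mathfrak{C}^{p_i}\) be K\"ohler's finite set of generators of \(\mathfrak{B}^{\Z/p_i}\). Let \(\mathfrak{C}^G\) consist of all external tensor products \(D_1\otimes\dots\otimes D_n\) with \(D_i\in\mathfrak{C}^{p_i}\), viewed as \(G\)\nb-\(\Cst\)\nb-algebras. Define
\[
U_{\mathfrak{C}^G}(A)=\bigoplus_{D\in\mathfrak{C}^G}\KK^G_*(D,A).
\]
This is a module over the \(\Z/2\)-graded category ring \(\Kring_G=\bigoplus_{D,D'}\KK^G_*(D,D')\), and it is a stable homological functor. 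Exactness and countability of the modules are defined as in K\"ohler's work.

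\emph{Steps.}

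(1) \(\mathfrak{C}^G\) generates \(\mathfrak{B}^G\). By the cited result, \(\mathfrak{B}^G\) is generated by \(\Cont(G/H)\cong\bigotimes_i \Cont(\Z/p_i/H_i)\). Each factor lies in the localising subcategory generated by \(\mathfrak{C}^{p_i}\), and the exterior tensor product is triangulated and preserves countable direct sums in each variable. Hence \(\Cont(G/H)\) lies in the localising subcategory generated by \(\mathfrak{C}^G\). The converse inclusion holds because tensor products of Type~I algebras are Type~I.

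(2) \(\Kring_G\cong\bigotimes_i\Kring_{p_i}\) as graded rings. This is the main obstacle. We would prove a K\"unneth-type statement: the exterior product
\[
\bigotimes_i\KK^{\Z/p_i}_*(D_i,D_i')\to\KK^G_*\Bigl(\bigotimes D_i,\bigotimes D_i'\Bigr)
\]
is an isomorphism. First try induction on \(n\) via the Green--Julg and induction/restriction adjunctions. K\"ohler's generators are built from \(\C\), \(\Cont(\Z/p)\) and mapping cones of maps between them. So by five-lemma arguments one reduces to \(\KK^G_*(\Cont(G/H),\Cont(G/H'))\cong \K^{H\cap H'}_*\)-type groups, that is, representation rings \(R(K)\) of subgroups. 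For these, coprimality gives \(R(K_1\times K_2)=R(K_1)\otimes R(K_2)\). The delicate point is that mapping cones may introduce \(\operatorname{Tor}\) terms. We expect them to vanish because the relevant \(R(\Z/p_i)\)-modules are free or the orders are coprime, so \(p\)-torsion meets \(q\)-torsion trivially. Compatibility with composition follows from functoriality of exterior products.

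(3) Projective dimension at most one. Using the tensor decomposition, an exact \(\Kring_G\)-module should admit a projective resolution of length one. Plan: show exact modules are exactly those \(M\) with \(\operatorname{Tor}\)/\(\Ext\) vanishing against the Koszul-type relations, and build the resolution as the tensor product of K\"ohler's length-one resolutions for each prime factor. Here coprimality is what ensures the tensor product of the resolutions is again a resolution of length one rather than \(n\). Concretely, localise at each prime \(p_i\): after inverting all other primes the ring collapses to \(\Kring_{p_i}\)-like pieces, and K\"ohler's result applies. A global dimension bound can then be checked prime by prime, together with the rational case.

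(4) Realisation. Every countable exact module \(M\) is \(U_{\mathfrak{C}^G}(A)\) for some \(A\in\mathfrak{B}^G\), and \(U_{\mathfrak{C}^G}(C)\) is exact for all \(C\). Exactness follows from the exact triangles relating the generators. With (1), (3) and the fact that \(U_{\mathfrak{C}^G}\) maps the generators to projectives, which holds by the Yoneda lemma, the abstract UCT theorem of Meyer--Nest gives the short exact sequence. The lifting of isomorphisms then follows by the standard argument: lift the isomorphism through the surjection, and note that a map in \(\mathfrak{B}^G\) inducing an isomorphism on \(U_{\mathfrak{C}^G}\) is invertible because its cone has vanishing invariant and is therefore zero.
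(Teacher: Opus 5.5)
\paragraph{Verdict.} Your steps (1) and (4) match the paper. Your step (2) is a workable alternative to the paper's route, but step (3), which is the heart of the theorem, has a genuine gap.

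\paragraph{Step (2).} You d\'evissage with the five lemma down to the objects \(\Cont(G/H)\) and use \(R(K_1\times K_2)\cong R(K_1)\otimes R(K_2)\). The paper instead uses Poincar\'e duality, Green--Julg and the non-equivariant K\"unneth theorem. For your route you must actually check that all groups \(\KK^{\Z/p}_*(D,D')\) with \(D,D'\in\mathfrak{C}^{\Z/p}\) are free abelian. That freeness, not coprimality, is what keeps the tensored long exact sequences exact and kills the \(\operatorname{Tor}\)-terms.

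\paragraph{Why step (3) fails as stated.} None of the mechanisms you suggest works.
\begin{enumerate}
\item An exact \(\Kring_G\)-module is not a tensor product of \(\Kring_{p_i}\)-modules, so there is nothing to tensor K\"ohler's resolutions with. Even for \(M_1\otimes M_2\), the tensor product of two length-\(1\) resolutions has length~\(2\), and coprimality does not shorten it.
\item A ``global dimension bound'' is false. Non-exact countable \(\Kring_p\)-modules have projective dimension at least~\(2\) by Theorem~\ref{the:range_U}; an example is \(\Mp_0=\Z/p\), \(\Mp_1=\Mp_2=0\), with all \(\Ga{j}{k}\) acting by~\(0\). The same happens over \(\Kring_G[1/q]\). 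So the bound can only be proved for exact modules, and exactness must be carried through every step.
\item K\"ohler's theorem concerns \(\Kring_p\), not \(\Kring_p\otimes_\Z\Z[1/q,\vartheta_q]\), so even your local statements need proof.
\item Even granting projectivity after inverting \(p\) and after inverting \(q\), passing back to \(\Kring_G\) is not automatic for countably (not finitely) generated modules. \(\Ext^1\) does not commute with localisation for such modules, and you give no argument for this step.
\end{enumerate}

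\paragraph{The missing idea.} What you need is the mechanism of Lemma~\ref{lem:ext1_van}.
\begin{enumerate}
\item \emph{Reduction.} Take a free module \(F\prto\MG\). Its kernel is exact, by two-out-of-three for the six-term cyclic complexes, since free modules are sums of \(U_{\mathfrak{C}^G}(D)\). The kernel is also free abelian, as a subgroup of \(F\). So it suffices to show that exact, free abelian, countably generated modules are projective.
\item \emph{Partial splitting.} Let \(\sigma\colon\MG'\into\MG''\xrightarrow{\beta}\MG\) be an extension. Induction on the number of primes, with K\"ohler's theorem as the base case, gives a \(\Kring_{\abs{G}/p}\)-linear section~\(\gamma\).
\item \emph{Averaging.} Make \(\gamma\) diagonal for the idempotents \(\Gu{j}\). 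Average over \(\Gt{0}\) and \(\Gs{1}\), using \(\Gt{0}^p=\Gu{0}\) and \(\Gs{1}^p=\Gu{1}\). Then rebuild the components at the vertices \(1\) and~\(2\) by conjugating with the \(\Ga{j}{k}\), using \(\Ga{0}{1}\Ga{1}{0}\Ga{0}{1}=p\,\Ga{0}{1}\) and \(p\Gu{0}-N(\Gt{0})=(\Gu{0}-\Gt{0})\sum_{i=0}^{p-2}(p-i-1)\Gt{0}^i\).
\item \emph{Conclusion.} This yields a \(\Kring_G\)-linear \(\gamma''\) with \(\beta\gamma''=p^2\), so \(p^2[\sigma]=0\) in \(\Ext^1_{\Kring_G}(\MG,\MG')\). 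The same argument with a second prime divisor \(p_1\neq p\) gives \(p_1^2[\sigma]=0\). Since \(\gcd(p^2,p_1^2)=1\), we get \([\sigma]=0\).
\end{enumerate}
This is exactly where the distinctness of the primes enters. Your localisation heuristic, that \(\Kring_p\) becomes ``separable'' once \(p\) is inverted, points in this direction. It has to be turned into an argument of this kind, working directly with \(\Ext^1\)-classes over \(\Kring_G\).
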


Our proof method also describes the range of the invariant: these are
precisely the \(\Z/2\)-graded modules over~\(\Kring_G\) that are
countable and exact.
It was shown in~\cite{Meyer:Actions_Kirchberg} that any
\(\KK^G\)-equivalence class in the equivariant bootstrap class is
represented by a pointwise outer action on a Kirchberg algebra, still
in the bootstrap class.
Gabe and Szab\'o~\cite{Gabe-Szabo:Dynamical_Kirchberg} have shown that
two such actions are cocycle conjugate if and only if they are
\(\KK^G\)-equivalent.
Thus we get a classification of pointwise outer actions of~\(G\) on
Kirchberg algebras in the bootstrap class.

We start with some preliminaries on homological algebra in equivariant
KK-theory and on K\"ohler's UCT in Section~\ref{sec:preliminaries}.
Section~\ref{sec:KK_product} contains a computation of certain
equivariant \(\KK\)-groups when the group is a product and both
\(\Cst\)\nb-algebras are tensor products; this is needed to compute
the ring~\(\Kring_G\).
Section~\ref{sec:extension} describes the
functor~\(U_{\mathfrak{C}^G}\) and contains the proof of
Theorem~\ref{the:Eqv_UCT}.
The key lemma says that any module over~\(\Kring_G\) in the range of
the invariant has a projective resolution of length~\(1\).
We reduce this to the analogous result for cyclic groups of prime
order using that~\(\Kring_G\) is a tensor product of rings for the
prime order factors of~\(G\).

\section{Some preliminaries}
\label{sec:preliminaries}

Let~\(G\) be a second countable, locally compact group.  The Kasparov
category \(\KK^G\) of separable \(G\)\nb-\(\Cst\)-algebras is a
symmetric monoidal triangulated category.  Its tensor product is given
by the spatial tensor product of \(\Cst\)\nb-algebras equipped with
the diagonal \(G\)\nb-action.  Exact triangles arise either from
mapping cones of equivariant \(^*\)-homomorphisms, or from extensions
of \(G\)\nb-\(\Cst\)-algebras that admit a \(G\)\nb-equivariant,
completely positive, contractive section (see the Appendix
of~\cite{Meyer-Nest:BC}).  The suspension functor is defined by
\(\Sigma \defeq \Cont_0(\R)\otimes {-}\).  By Bott periodicity, this
functor is an involutive equivalence.  Moreover, \(\KK^G\) admits
countable coproducts induced by \(\Cont_0\)-direct sums, denoted
by~\(\oplus\).

\subsection{Homological algebra in \texorpdfstring{$\KK^G$}{KKG}}

The aim of this article is to extend the Universal Coefficient Theorem
of~\cite{Koehler:Thesis} to finite cyclic groups of square-free order.
The framework for this is homological algebra in triangulated
categories.  Therefore, we recall some basic facts from this theory.
The situation that we will need is a special case of the following more
general setup.

Let~\(\mathfrak{T}\) be a triangulated category with countable
coproducts.  Let~\(\Sigma\) denote its suspension.  An object
\(C\in\mathfrak T\) is called \emph{\(\aleph_1\)\nb-compact} if the
representable functor
\(\mathfrak{T}(C,{-})\colon \mathfrak T\to \mathfrak{Ab}\) to the
category of abelian groups commutes with countable coproducts.  Now
let~\(\mathfrak{C}\) be an at most countable set of such objects
in~\(\mathfrak{T}\), and assume moreover that
\(\mathfrak{T}_n(C,A) \defeq \mathfrak{T}(\Sigma^n C, A)\) is
countable for all \(A \in \mathfrak{T}\) and \(n \in \Z\).
Let \(\mathfrak{Ab}^{\Z}\) denote the abelian category of
\(\Z\)\nb-graded abelian groups, equipped with the suspension
homomorphism that shifts degrees.  Define the functor
\[
  F_{\mathfrak{C}}\colon\mathfrak{T} \to
  \prod_{C\in \mathfrak{C}}\mathfrak{Ab}^{\Z}, \qquad
  A \mapsto \bigl(\mathfrak{T}_n(C,A)\bigr)
  _{C \in \mathfrak{C},n\in\Z}.
\]
This intertwines the suspensions in~\(\mathfrak T\) and
\(\mathfrak{Ab}^{\Z}\).
We call a category with a fixed (suspension) automorphism
\emph{stable}.
We call a functor between stable categories \emph{stable} when it
intertwines the suspensions.
The kernel of~\(F_{\mathfrak{C}}\) on morphisms,
\(\ideal_{\mathfrak{C}}\), is the prototypical example of a
stable homological ideal in a triangulated category.
This ideal specifies how to do relative homological algebra
in~\(\mathfrak T\).

Let \(\langle \mathfrak{C} \rangle \subseteq \mathfrak T\) denote the
smallest triangulated subcategory of~\(\mathfrak T\) that is closed
under countable coproducts and contains~\(\mathfrak C\).
This subcategory will play the role of the \emph{bootstrap category}
in \(\KK\).

A stable homological functor \(H\colon \mathfrak T\to \mathfrak A\)
is called \emph{\(\ideal\)\nb-exact} if it vanishes on the
ideal~\(\ideal\).
An \(\ideal\)\nb-exact stable homological functor
\(U\colon \mathfrak T\to \mathfrak{A}_\ideal\) is called
\emph{universal} if any \(\ideal\)\nb-exact stable homological functor
\(H\colon \mathfrak T\to \mathfrak A\) factors uniquely as
\(\bar{H}\circ U\), for a stable exact functor
\(\bar{H}\colon \mathfrak{A}_\ideal\to \mathfrak A\).
Such a universal functor closely relates the
\(\ideal\)\nb-relative homological algebra in~\(\mathfrak{T}\) to the
homological algebra in the abelian category~\(\mathfrak{A}_\ideal\).
In particular, the relative derived functors in~\(\mathfrak{T}\) are
identified with those in~\(\mathfrak{A}_\ideal\) after composition
with~\(U\).

We now describe the universal \(\ideal_{\mathfrak{C}}\)\nb-exact
stable homological functor.
Let~\(\mathfrak{C}\) also denote the \(\Z\)\nb-graded
pre-additive category with objects~\(\mathfrak{C}\) and morphisms
\(\bigoplus_{n \in \Z} \mathfrak{T}_n(A,B)\) for \(A,B \in
\mathfrak{C}\).
A \emph{right \(\mathfrak{C}\)-module} is defined as a contravariant
stable additive functor \(\mathfrak{C}^\op \to
\mathfrak{Ab}^{\Z}\).
These modules form a stable abelian category
\(\mathfrak{Mod}(\mathfrak{C}^\op)\) with direct sums and enough
projective objects.
The subcategory of countable modules is denoted by
\(\mathfrak{Mod}(\mathfrak{C}^\op)_{\aleph_1}\).
Equipping \(\bigl(\mathfrak{T}_n(C,A)\bigr)_{n \in \Z}\) with
the right \(\mathfrak{C}\)\nb-module structure induced by composition
in~\(\mathfrak{T}\), we enrich~\(F_{\mathfrak{C}}\) to a functor
\[
  U_{\mathfrak{C}} \colon \mathfrak{T} \longrightarrow
  \mathfrak{Mod}(\mathfrak{C}^\op)_{\aleph_1}.
\]
The category \(\mathfrak{Mod}(\mathfrak{C}^\op)_{\aleph_1}\)
is isomorphic to the category of left modules over the category ring
of~\(\mathfrak{C}^\op\).

The following is the abstract Universal Coefficient Theorem (UCT):

\begin{theorem}
  \label{thm:truct}
  Let~\(\mathfrak T\) be a triangulated category with countable
  coproducts and let \(\mathfrak C \subseteq \mathfrak{T}\) be a set
  of \(\aleph_1\)\nb-compact objects.  The universal
  \(\mathfrak{I}_{\mathfrak{C}}\)\nb-exact stable homological functor
  is~\(U_\mathfrak{C}\).

  Let \(A\in \langle \mathfrak C\rangle\) and \(B \in \mathfrak T\).
  If the \(\mathfrak{C}\)-module \(U_{\mathfrak{C}}(A)\) has a
  projective resolution of length~\(1\), then there is a natural short
  exact sequence
  \[
    \Ext_{{\mathfrak C}}^1
    \bigl(U_{\mathfrak{C}}(\Sigma A),U_{\mathfrak{C}}(B)\bigr)
    \into \mathfrak{T}(A,B)
    \prto \Hom_{{\mathfrak C}}
    \bigl(U_{\mathfrak{C}}(A),U_{\mathfrak{C}}(B)\bigr).
  \]
\end{theorem}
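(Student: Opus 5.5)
The plan is to follow the Meyer--Nest approach to relative homological algebra in triangulated categories. The proof has three stages: show that \(U_{\mathfrak C}\) is universal, construct enough \(\ideal_{\mathfrak C}\)-projective objects, and derive the exact sequence from a length-one resolution.

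\textbf{Universality.} First check that \(U_{\mathfrak C}\) is a stable homological functor. It is homological because each \(\mathfrak T(\Sigma^n C,{-})\) is. Its kernel on morphisms is \(\ideal_{\mathfrak C}\) by definition, so it is \(\ideal_{\mathfrak C}\)-exact.

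Next construct a left adjoint \(L\) to \(U_{\mathfrak C}\), defined on projective modules. The free module on \(C\in\mathfrak C\) in degree~\(n\) is the representable functor \(\mathfrak C({-},\Sigma^n C)\). Send it to \(\Sigma^n C\). Extend \(L\) to countable direct sums of such free modules, using countable coproducts in~\(\mathfrak T\).

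The \(\aleph_1\)-compactness of \(C\) gives \(U_{\mathfrak C}(\bigoplus \Sigma^{n_i} C_i)\cong\bigoplus \mathfrak C({-},\Sigma^{n_i}C_i)\). By the Yoneda lemma,
\[
\mathfrak T(L(P),B)\cong \Hom_{\mathfrak C}(P,U_{\mathfrak C}(B))
\]
for such~\(P\), and then for their countable direct summands.

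So every countable projective module \(P\) has the form \(U_{\mathfrak C}(L(P))\) with \(L(P)\) \(\ideal_{\mathfrak C}\)-projective, and the projective objects \(\mathfrak C\) generate. The Meyer--Nest criterion (``Homological algebra in triangulated categories'') then says that a homological functor with enough projectives, which induces an equivalence on projectives compatible with the adjunction, is universal. This identifies \(\mathfrak{Mod}(\mathfrak C^\op)_{\aleph_1}\) with the universal abelian approximation. The countability hypothesis is what keeps all modules involved countable.

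\textbf{The exact sequence.} Take a projective resolution \(0\to P_1\xrightarrow{d} P_0\to U_{\mathfrak C}(A)\to0\). Lift \(d\) to \(\hat d\colon L(P_1)\to L(P_0)\) by the adjunction, and complete it to an exact triangle
\[
L(P_1)\to L(P_0)\to \tilde A\to \Sigma L(P_1).
\]
Since \(U_{\mathfrak C}(\hat d)=d\) is injective, the long exact sequence shows that \(U_{\mathfrak C}(\tilde A)\cong U_{\mathfrak C}(A)\). Lift the augmentation \(P_0\to U_{\mathfrak C}(A)\) to \(L(P_0)\to A\). Its composite with \(\hat d\) vanishes, because \(\mathfrak T(L(P_1),A)\cong\Hom(P_1,U_{\mathfrak C}(A))\) and the composite \(P_1\to U_{\mathfrak C}(A)\) is zero. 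Hence it factors through a map \(\tilde A\to A\) that induces an isomorphism on \(U_{\mathfrak C}\).

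The cone \(N\) of \(\tilde A\to A\) satisfies \(U_{\mathfrak C}(N)=0\) and lies in \(\langle\mathfrak C\rangle\), since \(\tilde A\) and \(A\) both do. Let \(\mathcal N\) be the full subcategory of objects \(X\) with \(\mathfrak T(X,N)=0\). It contains \(\mathfrak C\) because \(U_{\mathfrak C}(N)=0\), and it is closed under suspensions, cones and countable coproducts. Hence \(\mathcal N\supseteq\langle\mathfrak C\rangle\), so \(\mathfrak T(N,N)=0\), so \(N=0\) and \(\tilde A\cong A\).

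Now apply \(\mathfrak T({-},B)\) to the triangle and use the adjunction to get the long exact sequence
\[
\cdots\to\Hom(P_0,U\Sigma B)\to\Hom(P_1,U\Sigma B)\to\mathfrak T(A,B)\to\Hom(P_0,UB)\to\Hom(P_1,UB)\to\cdots,
\]
writing \(U\) for \(U_{\mathfrak C}\). The kernel and cokernel at the two ends are \(\Hom_{\mathfrak C}(UA,UB)\) and \(\Ext^1_{\mathfrak C}(UA,U\Sigma B)\cong\Ext^1_{\mathfrak C}(U\Sigma A,UB)\). The last isomorphism uses stability and the fact that \(\Sigma\) is an automorphism of both categories.

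Naturality in \(B\) is clear. Naturality in \(A\) follows because the map \(\mathfrak T(A,B)\to\Hom\) is just \(U_{\mathfrak C}\), and the identification of the \(\Ext\) term is independent of the chosen resolution.

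\textbf{Main obstacle.} I expect the hardest step to be the localisation argument showing \(N=0\): one has to verify that the class of objects killed by \(U_{\mathfrak C}\) is right orthogonal to \(\langle\mathfrak C\rangle\), with the countable coproduct closure handled correctly. The universality statement should follow more mechanically from the adjunction.
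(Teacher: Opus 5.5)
Your proposal is correct and is essentially the argument of Meyer--Nest's Theorem~4.4, which the paper simply cites. The shared steps are: universality via a left adjoint $L$ defined on countable projective modules; realising the length-one resolution by an exact triangle $L(P_1)\to L(P_0)\to A\to\Sigma L(P_1)$; and the fact that an object $N\in\langle\mathfrak C\rangle$ with $U_{\mathfrak C}(N)=0$ must vanish. There are two small slips. First, the orthogonal class must be $\{X : \mathfrak T(\Sigma^n X,N)=0 \text{ for all } n\in\Z\}$, otherwise it is not closed under suspension. Second, the adjunction gives $\mathfrak T(\Sigma L(P),B)\cong\Hom_{\mathfrak C}\bigl(P,U_{\mathfrak C}(\Sigma^{-1}B)\bigr)$, so the cokernel term is $\Ext^1_{\mathfrak C}\bigl(U_{\mathfrak C}(A),U_{\mathfrak C}(\Sigma^{-1}B)\bigr)\cong\Ext^1_{\mathfrak C}\bigl(U_{\mathfrak C}(\Sigma A),U_{\mathfrak C}(B)\bigr)$ in general; your version with $\Sigma B$ only works because $\Sigma^2\cong\mathrm{id}$ in $\KK^G$.
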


\begin{proof}
  These statements are contained in
  \cite{Meyer-Nest:Homology_in_KK}*{Theorem~4.4}.
\end{proof}

When \(\mathfrak{T}\) is an equivariant Kasparov category, the
suspension functor is an involutive equivalence.
Hence the \(\Z\)\nb-graded modules above become \(\Z/2\)\nb-graded.

\begin{example}
  \label{exa:nonequiv}
  Let~\(G\) be a trivial group.
  Then \(\mathfrak{C} = \{\C\}\)
  generates the well-known bootstrap class and the functor
  \(U_{\mathfrak{C}}\) is topological \(\K\)\nb-theory, regarded as a
  functor to the stable abelian category of countable
  \(\Z/2\)-graded abelian groups.  The latter has global
  homological dimension~\(1\).  Hence, Theorem~\ref{thm:truct}
  applies.  It gives the Universal Coefficient Theorem of Rosenberg
  and Schochet~\cite{Rosenberg-Schochet:Kunneth}.
\end{example}

\subsection{The equivariant bootstrap class}
\label{sec:equivariant_bootstrap}

The bootstrap class in Example~\ref{exa:nonequiv} has several
equivalent descriptions.
Some of these still work in the equivariant case.

\begin{definition}
  Let~\(G\) be a finite group.
  The \emph{equivariant bootstrap class}~\(\mathfrak B^G\) is defined
  as the class of \(G\)\nb-\(\Cst\)\nb-algebras that are
  \(\KK^G\)\nb-equivalent to a \(G\)\nb-action on a Type~I
  \(\Cst\)\nb-algebra.
\end{definition}

The bootstrap class may also be described as the localising
subcategory generated by the collection of actions of~\(G\) on
finite-dimensional \(\Cst\)\nb-algebras.
This generating set, however, is rather large and clearly has some
redundancies.
It is useful to find a set of generators that is irredundant in the
sense that no proper subset is again a generating set.
For a finite group~\(G\), a smaller generating set was found
in~\cite{Meyer-Nadareishvili:UCT_actions}, and we are going to show
that it is irredundant in the above sense in
Proposition~\ref{pro:necessary_gen}.
First, we describe this generating set:

\begin{theorem}[\cite{Meyer-Nadareishvili:UCT_actions}*{Corollary~3.3}]
  \label{thm:generators}
  Let~\(G\) be a finite group.  Choose one representative for each
  conjugacy class of cyclic subgroups and let~\(\mathfrak{C}\) be the
  set of \(G\)\nb-\(\Cst\)\nb-algebras \(\Cont(G/H)\) for these
  representatives of cyclic subgroups \(H\subseteq G\).  The bootstrap
  class is the smallest localising subcategory that
  contains~\(\mathfrak{C}\).
\end{theorem}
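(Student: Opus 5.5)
The plan is to reduce to actions on finite-dimensional algebras, push the problem down to stabilisers by induction, and then close it with a support argument over the representation ring~\(R(G)\). Write \(\langle\mathfrak C\rangle\) for the localising subcategory generated by the algebras \(\Cont(G/H)\), \(H\) cyclic. Since \(\Cont(G/H)\) is commutative, hence Type~I, we have \(\langle\mathfrak C\rangle\subseteq\mathfrak B^G\). For the reverse inclusion I would use the description recalled before the theorem: \(\mathfrak B^G\) is the localising subcategory generated by the \(G\)\nb-actions on finite-dimensional \(\Cst\)\nb-algebras. So it suffices to show that every such action lies in \(\langle\mathfrak C\rangle\).

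\emph{Step 1 (reduction to a stabiliser).} A finite-dimensional \(G\)\nb-\(\Cst\)\nb-algebra \(D\) is a direct sum over \(G\)\nb-orbits of its simple summands. Each orbit summand is \(\Ind_H^G M_n\), where \(H\) is the stabiliser of one matrix block \(M_n\) and \(H\) acts on \(M_n\) by automorphisms. Such an action is \(\mathrm{Ad}\) of a projective representation with some cocycle~\(\omega\).

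\emph{Step 2 (induction preserves generators).} Induction \(\Ind_H^G\colon\KK^H\to\KK^G\) is triangulated and commutes with countable direct sums. It also satisfies \(\Ind_H^G\Cont(H/K)\cong\Cont(G/K)\). Hence it maps \(\langle\Cont(H/K): K\subseteq H\text{ cyclic}\rangle\) into \(\langle\mathfrak C\rangle\). This reduces the claim to the following: for every subgroup \(H\), and in fact for \(G\) itself, the twisted matrix algebra \(M_n(\omega)\) lies in the localising subcategory generated by the \(\Cont(H/K)\) with \(K\) cyclic.

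\emph{Step 3 (orthogonality argument).} Since the generators are \(\aleph_1\)\nb-compact, the theory of complementary subcategories gives, for each \(A\in\mathfrak B^G\), an exact triangle \(P\to A\to N\to\Sigma P\). Here \(P\in\langle\mathfrak C\rangle\) and \(N\) is orthogonal to~\(\mathfrak C\), that is,
\[
  \KK^G_*(\Cont(G/K),N)\cong\K^K_*(N)=0 \quad\text{for all cyclic } K .
\]
It then suffices to show that an object \(N\) of \(\mathfrak B^G\) (or of the localising subcategory generated by finite-dimensional actions) with these vanishing groups is zero. Equivalently, the ring \(\KK^G_0(N,N)\) vanishes.

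\emph{Step 4 (localisation over \(R(G)\); the main obstacle).} The ring \(\KK^G_0(N,N)\) is a module over \(R(G)=\KK^G_0(\C,\C)\). By Segal's theorem, every prime ideal \(\mathfrak p\subseteq R(G)\) has a support, which is a cyclic subgroup \(K\), determined up to conjugacy. I would show that the localisation \(\KK^G_0(N,N)_{\mathfrak p}\) vanishes for every~\(\mathfrak p\); this is enough because a module whose localisations at all primes vanish is zero. To do this, I would use the equivariant localisation theorem in the form proved by Arano--Kubota for (Type~I, hence bootstrap) \(G\)\nb-\(\Cst\)\nb-algebras. It says that after localising at~\(\mathfrak p\), the theory only depends on the restriction to the support~\(K\), via the induction/restriction adjunction and the idempotent in \(R(G)_{\mathfrak p}\) supported at~\(K\).

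Integrally, I expect this step to be the real difficulty. Artin's induction theorem only gives that \(\abs{G}\) kills \(\KK^G_0(N,N)\), which handles the rational part alone. The \(p\)\nb-local part needs the finer Atiyah--Segal/Arano--Kubota analysis.

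\emph{Step 5 (base case).} The localisation step leaves a residual statement for the cyclic group~\(K\) itself: an object of the \(K\)\nb-bootstrap class whose \(\K^L_*\) vanishes for all \(L\subseteq K\) must be zero. For this I would argue by induction on \(\abs{K}\). A prime of \(R(K)\) whose support is a proper subgroup \(L\subsetneq K\) reduces, via the localisation theorem, to the inductive hypothesis for~\(L\). At the prime ideals supported at \(K\) itself, the localised module is detected by \(\K^K_*(N)\), which vanishes by assumption.
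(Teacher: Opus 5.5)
Your Step~3 is the right frame, and Artin induction does dispose of the part away from $\abs{G}$. But Step~4, where all the remaining content lies, rests on a principle that is false.

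\emph{Why the principle fails.} The ring $R(G)_{\mathfrak p}$ is local, so it contains no nontrivial idempotent ``supported at~$K$''. The idempotents you have in mind exist only after inverting $\abs{G}$, which is precisely the part your Artin argument already handles. Segal's localisation theorem localises to the fixed-point part $G\cdot X^K$ of a $G$-space, not to the restriction to~$K$. At primes of residue characteristic~$p$, induction theorems only reduce to $p$-elementary subgroups, which are in general not cyclic.

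\emph{A counterexample.} Take $G=\Z/p$, $R(G)=\Z[t]/(t^p-1)$ and $\mathfrak p=(p,t-1)$, whose support is $\{1\}$. Let $N$ be the mapping cone of $f=1+t+\dotsb+t^{p-1}\in\KK^G_0(\C,\C)$, tensored with a UHF algebra $Q$ with $\K_0(Q)=\Q$ and trivial action.
\begin{itemize}
\item Since $f$ restricts to $p$, which is invertible in $\KK(Q,Q)$, we get $\Res^G_{\{1\}}N\cong0$.
\item $\K^G_0(N)$ is the cokernel of $f$ on $\Q[t]/(t^p-1)$, that is, $\Q(\vartheta_p)$ with $t$ acting by~$\vartheta_p$.
\item Every $s\notin\mathfrak p$ acts on $\K^G_0(N)$ by $s(\vartheta_p)\neq0$. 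Hence $s\cdot\mathrm{id}_N\neq0$ for all such~$s$, and $\KK^G_0(N,N)_{\mathfrak p}\neq0$.
\end{itemize}
So localising at~$\mathfrak p$ does not reduce to the restriction to the support. Your Step~5 uses the same invalid reduction, from $K$ to a proper subgroup~$L$.

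\emph{What is needed instead.} You need a detection theorem: an object of $\mathfrak B^G$ whose restrictions to all cyclic subgroups are $\KK$-contractible is itself contractible. This is the genuinely integral, $p$-local input. The paper does not reprove the statement; it quotes it from \cite{Meyer-Nadareishvili:UCT_actions}, where this integral input is drawn from \cite{Arano-Kubota:Atiyah-Segal}.

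\emph{How your argument then closes.} Granting detection, no localisation is needed.
\begin{itemize}
\item For cyclic $K$ every $2$-cocycle is trivial, since $H^2(K,\mathbb T)=0$. So by your Steps~1--2, every finite-dimensional $K$-$\Cst$-algebra is $\KK^K$-equivalent to a direct sum of algebras $\Cont(K/L)$. Hence $\mathfrak B^K=\langle\Cont(K/L): L\subseteq K\rangle$.
\item Since $\KK^K_*(\Cont(K/L),\Res^G_K N)\cong\K^L_*(N)=0$, this gives $\Res^G_K N\cong0$ for every cyclic~$K$. Your Artin argument tacitly needs this fact as well.
\item Detection then gives $N\cong0$.
\end{itemize}
Thus Steps~1--2 matter only for cyclic groups, and the induction in Step~5 is unnecessary.
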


\begin{lemma}
  \label{lem:K_does_not_detect}
  Let~\(G\) be a finite group and \(L\subseteq G\) a cyclic
  subgroup.  There is \(A\in \mathfrak{B}^G\) such that
  \(\K_*^L(A)\neq 0\) and \(\K_*^H(A)= 0\) for
  subgroups \(H\subseteq G\) that are not conjugate to~\(L\).
\end{lemma}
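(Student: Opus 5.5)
Since~\(G\) is finite, \(\K^H_*(A)\cong \K_*(A\rtimes H)\) by the Green--Julg theorem, which identifies \(\K^H_*(A)\) with \(\KK^G_*(\Cont(G/H),A)\) via induction/restriction. The idea is to build~\(A\) from \(\Cont(G/L)\) by tensoring with an object that kills \(\K\)\nb-theory for every subgroup not containing a conjugate of~\(L\), and then to kill the contributions from subgroups strictly larger than~\(L\) by a localisation in the representation ring. The natural tool is the Atiyah--Segal-type localisation: \(\K^H_*(A)\) is a module over the representation ring \(R(H)\), and after restricting to~\(L\) it becomes a module over \(R(L)\).

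The first candidate is \(A\defeq \Cont(G/L)\otimes D\), where \(D\in\mathfrak{B}^G\) is chosen so that \(\K^{H}_*(D)=0\) for all subgroups \(H\) not containing a conjugate of~\(L\). I would obtain~\(D\) as an iterated mapping cone: for each proper subgroup \(M\subsetneq L\), pick an element \(x_M\in R(G)\) whose restriction to~\(M\) vanishes (after rationalisation or a suitable prime) but whose restriction to~\(L\) is not nilpotent. For instance, \(x_M\) is the induction of characters supported on elements generating~\(L\). Then take \(D\) to be the tensor product over~\(M\) of the cones of multiplication by \(x_M\) on \(\C\), possibly combined with a localisation \(\C[x^{-1}]\) realised as a homotopy colimit (countable coproducts are available, so \(\mathfrak{B}^G\) is closed under homotopy colimits). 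Multiplication by an invertible element makes the cone \(\K^H\)\nb-acyclic, while on~\(L\) it stays nonzero.

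Next I would compute \(\K^H_*(\Cont(G/L)\otimes D)\) via Mackey's double coset formula: \(\Res^G_H \Cont(G/L)\) decomposes as \(\bigoplus \Cont(H/(H\cap gLg^{-1}))\), so
\[
  \K^H_*(\Cont(G/L)\otimes D)\cong \bigoplus \K^{H\cap gLg^{-1}}_*(D).
\]
This vanishes unless \(H\cap gLg^{-1}\) contains a conjugate of~\(L\), that is, unless \(H\supseteq gLg^{-1}\). For \(H=L\) the summand with \(g=1\) is nonzero. It remains to handle subgroups \(H\) properly containing a conjugate of~\(L\). Here I would instead build~\(D\) from the localisation at the prime ideal of \(R(G)\) attached to the cyclic subgroup~\(L\), in Segal's sense: the support of \(\K^H_*(D)\) lies at primes whose ``support subgroup'' is conjugate to a subgroup of~\(H\). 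Localising at the prime \(\mathfrak p_L\) (or rationally, using the character at a generator of~\(L\)) kills \(\K^H\) for \(H\) not containing a conjugate of~\(L\). To kill larger~\(H\), I would use \(\Cont(G/L)\otimes D\) with~\(D\) localised: the Mackey summands \(\K^{gLg^{-1}}_*(D)\) for \(H\supsetneq gLg^{-1}\) do not vanish automatically. So I would instead take~\(A\) to be the cone of the comparison map between \(\Cont(G/L)\otimes D\) and contributions from proper overgroups, or more simply use idempotents: rationally, \(R(G)\otimes\Q\) splits by cyclic subgroups up to conjugacy (Artin's induction/character idempotents \(e_L\)), and \(\K^H_*(A)\otimes\Q\) is a module over \(R(H)\otimes\Q\) on which \(e_L\) acts via restriction.

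The final choice is \(A\defeq e_L\cdot(\C\otimes \mathbb{M})\), where \(\mathbb M\) is a \(\Q\)\nb-coefficient object (a homotopy colimit of the UHF-type system, which lies in \(\mathfrak{B}^G\)). The idempotent \(e_L\in R(G)\otimes\Q\) splits off a summand, since \(\KK^G\) is idempotent complete. Then \(\K^H_*(A)=\Res^G_H(e_L)\cdot \K^H_*(\C)\otimes\Q\). Now \(\Res^G_H(e_L)\) is a sum of idempotents \(e_{L'}^H\) over \(H\)-classes of subgroups \(L'\subseteq H\) that are \(G\)\nb-conjugate to~\(L\). This is zero if~\(H\) contains no conjugate of~\(L\). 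If it does, the summand is \(e_{L'}^H R(H)\otimes\Q\neq 0\), which is wrong when \(H\) is larger. So I would replace \(\C\) by \(\Cont(G/L)\), which again brings in the same Mackey issue. The main obstacle is precisely this overgroup case.

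I expect the resolution is to use \(\KK^G\)-modules over the Burnside-like ring acting through~\(L\)-fixed-point characters. Concretely, I would take \(A=e_L\cdot(\Cont(G/L)\otimes\mathbb M)\) and verify directly, using the character formula on \(\K^H_*(\Cont(G/L))\otimes\Q=\) functions on \(L\)-conjugates in~\(H\), that the \(e_L\)-part at~\(H\) corresponds to generators of~\(L\) not generating a bigger cyclic subgroup. I would then check that it is nonzero only when \(H\) is conjugate to~\(L\).
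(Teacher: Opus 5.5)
\paragraph{Where the proposal fails.} Your final candidate \(A=e_L\cdot(\Cont(G/L)\otimes\mathbb M)\) does not have the required property, so the overgroup problem you isolated is still open. Since \(R(G)\) acts on \(\K^H_*(X)\) through \(\Res^G_H\), you get \(\K^G_*(A)\cong \Res^G_L(e_L)\cdot\bigl(\K^L_*(\C)\otimes\Q\bigr)=e^L_L\cdot\bigl(R(L)\otimes\Q\bigr)\neq 0\). Here \(e^L_L\) is the idempotent of \(R(L)\otimes\Q\) belonging to \(L\) itself. The same summand, coming from the double coset \(H\cdot 1\cdot L\), appears in \(\K^H_*(A)\) for every \(H\supseteq L\). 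So your final check fails whenever \(L\neq G\). Already for \(G=\Z/p\) and \(L=\{1\}\), the idempotent \(e_{\{1\}}=\rho/p\) acts as \(1\) on \(\K^G_*(\Cont(\Z/p))\cong\Z\).

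This is not an accident of the choice. On the \(e_L\)-part, the Mackey formula reduces \(\mathrm{res}^H_L\circ\mathrm{ind}^H_L\) to the norm of the Weyl group \(N_H(L)/L\). All other double cosets factor through proper subgroups of \(L\), where \(e_L\) restricts to \(0\). Hence \(\K^H_*(A)=0\) for \(H\supsetneq L\) forces this norm to vanish on \(e_L\K^L_*(A)\). Idempotents of \(R(G)\otimes\Q\) applied to \(\C\) or \(\Cont(G/L)\) never achieve that.

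\paragraph{The missing idea.} You need a second idempotent that twists by the Weyl group. The group \(N_G(L)/L\) acts on \(\Cont(G/L)\) by \(G\)-equivariant right translations, and you need a projection \(f\in\Q[N_G(L)/L]\) onto a representation on which the norm above vanishes. For cyclic \(G\), take \(f\) to be the idempotent of the faithful irreducible rational representation \(\Q(\zeta_{[G:L]})\) of \(G/L\), and set \(A=e_L f\,(\Cont(G/L)\otimes\mathbb M)\). Then \(\K^H_*(A)=0\) if \(L\not\subseteq H\). If \(L\subseteq H\), then \(\K^H_*(A)\cong f\,\Q[G/H]\otimes e^L_L\bigl(R(L)\otimes\Q\bigr)\), which vanishes exactly when \(H\neq L\). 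The paper reaches such an object abstractly: it identifies \(\mathfrak{B}^G_\Q\) with Mackey modules over the rationalised representation Green functor (Bouc--Dell'Ambrogio--Martos), and takes from Th\'evenaz--Webb a Mackey module supported on the conjugates of \(L\).

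\paragraph{A caveat on the statement.} For a general finite \(G\), no suitable twist need exist, and the statement as written then fails. For \(G=S_3\) and \(\abs{L}=2\), the condition \(\K_*(A)=0\) forces \(\mathrm{res}^{S_3}_L\circ\mathrm{ind}^{S_3}_L=\mathrm{id}\) on \(\K^L_*(A)\). So \(\K^L_*(A)\neq0\) implies \(\K^{S_3}_*(A)\neq 0\). Even if you only ask for vanishing at cyclic \(H\), the case \(G=(\Z/2)^2\) with \(L=\{1\}\) fails. For each involution \(c\), the condition \(\K^{\langle c\rangle}_*(A)=0\) forces \(c_*=-1\). By exactness of K\"ohler's invariant, it also forces \(2=1-c_*\) to be invertible on \(\K_*(A)\). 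On the other hand, \(a_*b_*=(ab)_*\) gives \(2\K_*(A)=0\), so \(\K_*(A)=0\). Any correct proof therefore has to use an extra hypothesis on \(G\), such as cyclicity.
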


\begin{proof}
  The object~\(A\) will, in fact, be ``rational'', that is, it belongs
  to the localisation~\(\mathfrak{B}^G_\Q\)
  of~\(\mathfrak{B}^G\) at the rational numbers.
  It is shown in~\cite{Bouc-DellAmbrogio-Martos:Splitting} that there
  is an equivalence from~\(\mathfrak{B}^G_\Q\) to the category
  of Mackey modules over the rationalised representation Green ring
  of~\(G\) that maps any object~\(A\) to the family \(\K_*^L(A)\) for
  all \(L\subseteq G\) with its canonical Mackey module structure.
  Therefore, an object with the desired properties exists
  in~\(\mathfrak{B}^G_\Q\) if and only if there is a Mackey
  module~\(\Mack\) over the rationalised representation Green ring
  of~\(G\) with the property that \(\Mack(L)\neq 0\) and \(\Mack(H)= 0\)
  for all subgroups \(H\subseteq G\) not conjugate to~\(L\).
  Such a Mackey module exists by
  \cite{Thevenaz-Webb:The_structure_of_Mackey_functors}*{Example~6.7}
  and
  \cite{Thevenaz-Webb:The_structure_of_Mackey_functors}*{Lemma~6.4}.
\end{proof}

A similar argument does not work for non-cyclic subgroups because
Artin’s Induction Theorem is used
in~\cite{Thevenaz-Webb:The_structure_of_Mackey_functors}*{Example~6.7}.

\begin{proposition}
  \label{pro:necessary_gen}
  Let~\(G\) be a finite group and \(L\subseteq G\) a cyclic subgroup.
  Then \(\langle \Cont(G/H)\mid H\subseteq G \text{ cyclic, not
    conjugate to } L \rangle\) is strictly smaller than
  \(\mathfrak{B}^{G}\).
\end{proposition}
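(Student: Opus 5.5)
We take the object \(A\in\mathfrak{B}^G\) provided by Lemma~\ref{lem:K_does_not_detect} for the cyclic subgroup~\(L\). We then show that \(A\) cannot lie in the localising subcategory
\[
  \mathfrak{D} \defeq \langle \Cont(G/H)\mid H\subseteq G \text{ cyclic, not conjugate to } L \rangle .
\]
Since \(A\in\mathfrak{B}^G\), this gives the strict inclusion \(\mathfrak{D}\subsetneq\mathfrak{B}^G\).

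The key tool is the Green--Julg isomorphism \(\KK^G_*(\Cont(G/H),B)\cong \KK^H_*(\C,\Res^G_H B)\cong \K^H_*(B)\) for every \(G\)\nb-\(\Cst\)\nb-algebra~\(B\). This follows from the induction--restriction adjunction, because \(\Cont(G/H)\cong\Ind_H^G\C\). By the choice of~\(A\), we get \(\KK^G_*(\Cont(G/H),A)\cong\K^H_*(A)=0\) for every \(H\) not conjugate to~\(L\).

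Now consider the full subcategory of all \(D\in\KK^G\) with \(\KK^G_*(D,A)=0\) in all degrees. Because \(\KK^G(\text{-},A)\) is cohomological, this subcategory is triangulated. It is also closed under countable direct sums, since \(\KK^G(\bigoplus D_i,A)\cong\prod\KK^G(D_i,A)\). By the previous paragraph it contains the generators \(\Cont(G/H)\) of~\(\mathfrak{D}\). Hence it contains all of~\(\mathfrak{D}\).

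Suppose, for contradiction, that \(A\in\mathfrak{D}\). Then \(\KK^G_0(A,A)=0\), so \(\mathrm{id}_A=0\) and \(A\cong 0\) in \(\KK^G\). This contradicts \(\K^L_*(A)\neq0\). We need the contravariant variable here: the covariant functors \(\KK^G_*(\Cont(G/H),\text{-})\) need not vanish on coproducts or cones in a useful way, whereas orthogonality in the first variable is automatically inherited by localising subcategories. The argument is then straightforward; its only real input is the nontrivial existence result in Lemma~\ref{lem:K_does_not_detect}.
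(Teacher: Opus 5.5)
Your proof is correct and follows the paper's argument: you take the object~\(A\) from Lemma~\ref{lem:K_does_not_detect}, use \(\KK^G_*(\Cont(G/H),A)\cong\K^H_*(A)\) to see that \(A\) is right-orthogonal to the generators, and conclude that \(A\) is orthogonal to the whole localising subcategory. The one step the paper leaves implicit, that \(A\) would then satisfy \(\KK^G_0(A,A)=0\) and hence \(A\cong 0\), contradicting \(\K^L_*(A)\neq 0\), is spelled out correctly in your version.
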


\begin{proof}
  For a subgroup~\(H\), let \(\Res^G_H\colon \KK^G\to\KK^H\) be the
  functor that restricts a \(G\)\nb-action to~\(H\).  Recall that the
  induction functor \(\Ind^G_H\colon \KK^H\to\KK^G\) is left adjoint
  to it (see~\cite{Meyer-Nest:BC}) and that
  \(\Cont(G/H) = \Ind_H^G \C\).  Let~\(A\) be any
  \(G\)\nb-\(\Cst\)\nb-algebra.  Then
  \[
    \KK^G_*(\Cont(G/H),A)
    \cong \KK^H_*(\C,A)
    \cong \K_*^H(A).
  \]
  Therefore, the object~\(A\) from Lemma~\ref{lem:K_does_not_detect}
  satisfies \(\KK^G_*(\Cont(G/L),A) \neq 0\) and
  \(\KK^G_*(\Cont(G/H),A) = 0\) for all other cyclic subgroups
  \(H\subseteq G\) that are not conjugate to~\(L\).
  Then \(\KK^G_*(D,A)=0\) for all objects in the localising
  subcategory generated by \(\Cont(G/H)\) for \(H\subseteq G\) not
  conjugate to~\(L\).
  So~\(A\) cannot belong to this localising subcategory.
\end{proof}

\subsection{K\"ohler's invariant}
\label{sec:kohler}

Let~\(p\) be a prime number and let \(G = \Z/p\).  Let~\(C_p\)
denote the mapping cone of the \(\Z/p\)\nb-equivariant unital
embedding \(\C\to \Cont(\Z/p)\).  Define
\[
  \mathfrak C^{\Z/p} = \left\{ C_p, \C, \Cont(\Z/p) \right\}.
\]
Let~\(\Kring_p\) be the \(\Z/2\)\nb-graded category ring of
\(({\mathfrak C^{\Z/p}})^\op\); that is,
\[
  \Kring_p =
  \KK^{\Z/p}\bigl(C_p\oplus\C\oplus\Cont(\Z/p),C_p\oplus
  \C\oplus \Cont(\Z/p)\bigr)^\op.
\]
The ring~\(\Kring_p\) was computed by Manuel Köhler
in~\cite{Koehler:Thesis}.  He showed that every object in the range of
the invariant~\(U_{\mathfrak{C}^{\Z/p}}\) admits a projective
resolution of length~\(1\).  Using this, he established an equivariant
Universal Coefficient Theorem for \(\KK^G\) (Theorem~\ref{the:range_U}
below).  Since we will be concerned with specific properties
of~\(\Kring_p\), we recall its presentation in terms of generators and
relations (see \cites{Koehler:Thesis, Meyer:Actions_Kirchberg} for
more details).

\begin{theorem}[\cite{Meyer:Actions_Kirchberg}*{Theorem~5.10}]
  \label{the:generators_and_relations}
  The ring~\(\Kring_p\) is the universal ring generated by elements
  \(\Gu{j}\) for \(j=0,1,2\) and~\(\Ga{j}{k}\) for \(0\le j,k \le 2\)
  with \(j\neq k\) with the relations
  \begin{align*}
    \Gu{j}\Gu{k}&= \delta_{j,k} \Gu{j}
    \qquad \text{for }j,k\in\{0,1,2\},\\
    \Gu{0}+\Gu{1}+\Gu{2}&= 1.\\
    \Gu{j} \Ga{j}{k} \Gu{k}&= \Ga{j}{k},\\
    \Ga{j}{k} \Ga{k}{m}&= 0\qquad \text{if } \{j,k,m\}=\{0,1,2\}, \\
    \Ga{0}{1}\Ga{1}{0}&= N(\Gu{0}- \Ga{0}{2}\Ga{2}{0}),\\
    \Ga{1}{0}\Ga{0}{1}&= N(\Gu{1}- \Ga{1}{2}\Ga{2}{1}),\\
    p\cdot \Gu{2} &= N(\Gu{2}- \Ga{2}{0}\Ga{0}{2})
        +  N(\Gu{2}- \Ga{2}{1}\Ga{1}{2}),
  \end{align*}
  where \(N(t) \defeq 1 + t + \dotsb + t^{p-1}\).
  The \(\Z/2\)\nb-grading on~\(\Kring_p\) is such that \(\Ga{1}{2}\)
  and \(\Ga{2}{1}\) are odd and all other generators are even.
\end{theorem}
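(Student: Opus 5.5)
The plan is to compute all nine morphism groups \(\KK^{\Z/p}_*(X,Y)\) for \(X,Y\in\mathfrak C^{\Z/p}\) explicitly, and to name concrete morphisms for the generators. Then we check the relations, and finally compare the universal ring with \(\Kring_p\) summand by summand. The indexing I intend is: \(\Gu{j}\) is the identity of the \(j\)th object, and the odd generators \(\Ga{1}{2},\Ga{2}{1}\) are the maps that involve a suspension in the mapping cone triangle
\[
  \Sigma \Cont(\Z/p) \to C_p \to \C \to \Cont(\Z/p).
\]
The other even maps are the unital embedding \(\C\to\Cont(\Z/p)\), its dual transfer \(\Cont(\Z/p)\to\C\), and the maps linking the remaining pairs. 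I will fix the exact matching of indices with objects so that the grading in the statement comes out right.

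First I compute the Hom groups between \(\C\) and \(\Cont(\Z/p)\) only. We have \(\KK^{\Z/p}_*(\C,\C)=R(\Z/p)=\Z[t]/(t^p-1)\) in even degree. Since \(\Cont(\Z/p)=\Ind_1^{\Z/p}\C\) and induction is both left and right adjoint to restriction for finite groups, every Hom group with \(\Cont(\Z/p)\) as source or target reduces to nonequivariant \(\K\)\nb-theory. In particular \(\KK^{\Z/p}(\Cont(\Z/p),\Cont(\Z/p))\cong\Z[\Z/p]\), generated by translation. Frobenius reciprocity then gives the two composites of the unit and counit of the adjunction. The composite \(\C\to\Cont(\Z/p)\to\C\) is the regular representation \(N(t)\), and \(\Cont(\Z/p)\to\C\to\Cont(\Z/p)\) is the sum of all translations, \(N(\text{translation})\). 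The generator \(t\) must appear as \(\Gu{j}-\Ga{j}{2}\Ga{2}{j}\), that is, as \(1\) minus a composite through~\(C_p\). So the next step is to feed the mapping cone triangle into the long exact sequences for \(\KK^{\Z/p}_*(C_p,-)\) and \(\KK^{\Z/p}_*(-,C_p)\). From these I expect to show that \(\Gu{0}-\Ga{0}{2}\Ga{2}{0}\) and the analogous element on the other object are the generator \(t\) and the translation. This gives the relations \(\Ga{0}{1}\Ga{1}{0}=N(\cdots)\) and \(\Ga{1}{0}\Ga{0}{1}=N(\cdots)\). The vanishing relations \(\Ga{j}{k}\Ga{k}{m}=0\) for \(\{j,k,m\}=\{0,1,2\}\) should all be consequences of the fact that consecutive maps in an exact triangle compose to zero, after choosing the generators compatibly with the triangle.

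The step I expect to be the main obstacle is the endomorphism ring of \(C_p\) and the relation
\[
  p\cdot\Gu{2}=N(\Gu{2}-\Ga{2}{0}\Ga{0}{2})+N(\Gu{2}-\Ga{2}{1}\Ga{1}{2}).
\]
My first approach is to compute \(\KK^{\Z/p}_*(C_p,C_p)\) by two long exact sequences. In the first variable this gives a sequence built from \(\KK^{\Z/p}_*(C_p,\C)\) and \(\KK^{\Z/p}_*(C_p,\Cont(\Z/p))\), which are known from the previous step. As a check I will use that \(C_p\) restricts to something \(\KK\)-equivalent to a shift of \(\C^{p-1}\), so that \(\K_*\) of its crossed product is computable. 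The relation itself I would then verify after applying the faithful invariant \(\KK^{\Z/p}_*(C_p,-)\) to the triangle, by tracking the action on explicit \(\K\)-theory classes.

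Finally I show that the universal ring \(R\) maps isomorphically onto \(\Kring_p\). Surjectivity follows because the computed Hom groups are generated, as abelian groups, by words in the chosen morphisms. For injectivity I use the relations to reduce every \(\Gu{j}R\Gu{k}\) to a spanning set of normal-form words, namely polynomials in \(t\) of degree less than \(p\) and their products with single \(\Ga{j}{k}\). I then check that these words map to a basis of the corresponding free, or explicitly described, summand of \(\Kring_p\). Since the rank bound in \(R\) and the linear independence in \(\Kring_p\) match, the map is an isomorphism.
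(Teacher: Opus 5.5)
Your plan is correct and follows essentially the route behind this result. The paper does not prove it but imports it from Meyer's Theorem~5.10, which rests on K\"ohler's explicit computation of the nine groups \(\KK^{\Z/p}_*(X,Y)\) for \(X,Y\in\{C_p,\C,\Cont(\Z/p)\}\) and their composition products, followed by a comparison of the universal ring with \(\Kring_p\), a free abelian group of rank \(8p-4\).

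Two small corrections to your execution:
\begin{itemize}
\item Only three of the six relations \(\Ga{j}{k}\Ga{k}{m}=0\) come from the mapping cone triangle. All six nevertheless hold, simply because each of these \(\KK\)-groups is concentrated in a single parity; for example, \(\KK^{\Z/p}_0(\Cont(\Z/p),C_p)=0\).
\item For the relation at~\(C_p\), detect elements of \(\KK^{\Z/p}_0(C_p,C_p)\) by composing with \(C_p\to\C\) and with the odd map \(C_p\to\Sigma\Cont(\Z/p)\). This pair of compositions is jointly injective because \(1-\tau\), for \(\tau\) the translation, acts injectively on the augmentation ideal of \(\Z[\Z/p]\).
\end{itemize}
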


This theorem is proven in the above form
in~\cite{Meyer:Actions_Kirchberg}, following the computations
in~\cite{Koehler:Thesis}.  It is very convenient to name the following
elements of~\(\Kring_p\):
\[
  \begin{aligned}
    \Gt{0}&\defeq \Gu{0}- \Ga{0}{2} \Ga{2}{0},&\qquad
    \Gs{1}&\defeq \Gu{1}- \Ga{1}{2} \Ga{2}{1},\\
    \Gt{2}&\defeq \Gu{2}- \Ga{2}{0} \Ga{0}{2},&\qquad
    \Gs{2}&\defeq \Gu{2}- \Ga{2}{1} \Ga{1}{2}.
  \end{aligned}
\]
The elements \(\Gt{j}\) and~\(\Gs{j}\) are even.  The relations in
Theorem~\ref{the:generators_and_relations} say that
\begin{align}
  \label{eq:alpha_010_N}
  \Ga{0}{1} \Ga{1}{0}&= N(\Gt{0}),\\
  \label{eq:alpha_101_N}
  \Ga{1}{0} \Ga{0}{1}&= N(\Gs{1}),\\
  \label{eq:alpha_2_N}
  p\cdot \Gu{2} &= N(\Gt{2}) +  N(\Gs{2}).
\end{align}
The domain and codomain projections of the elements above are shown in
Figure~\ref{fig:K_p}.
\begin{figure}[tbp] \centering
  \begin{tikzcd}[row sep = 5em, column sep = 2.5em]
    &1 \ar[loop above, "1_1{,}s_1"] \ar[dr, shift right, "\Ga{2}{1}"']
    \ar[dl, shift right, "\Ga{0}{1}"'] &\\
    0 \ar[loop left, "1_0{,}t_0"] \ar[rr, shift right,"\Ga{2}{0}"']
    \ar[ur, shift right, "\Ga{1}{0}"']&&
    2 \ar[loop right, "1_2{,}t_2{,}s_2"] \ar[ul, shift right,
    "\Ga{1}{2}"']
    \ar[ll, shift right, "\Ga{0}{2}"']
  \end{tikzcd}
  \caption{The ring~\(\Kring_p\) can be viewed as the
    \(\Z/2\)\nb-graded path ring of a graph with three vertices and the
    six edges~\(\alpha_{ij}\), modulo the relations described in
    Theorem~\ref{the:generators_and_relations}.}
  \label{fig:K_p}
\end{figure}
The definitions of \(\Gt{j}\) and~\(\Gs{j}\) say that
\begin{align}
  \label{eq:alpha_020}
  \Ga{0}{2} \Ga{2}{0}&=  \Gu{0}- \Gt{0},\\
  \label{eq:alpha_121}
  \Ga{1}{2} \Ga{2}{1}&=  \Gu{1}- \Gs{1},\\
  \label{eq:alpha_202}
  \Ga{2}{0} \Ga{0}{2}&=  \Gu{2}- \Gt{2},\\
  \label{eq:alpha_212}
  \Ga{2}{1} \Ga{1}{2}&=  \Gu{2}- \Gs{2}.
\end{align}
Equations \eqref{eq:alpha_010_N}, \eqref{eq:alpha_101_N} and
\eqref{eq:alpha_020}--\eqref{eq:alpha_212} express the products
\(\Ga{j}{k} \Ga{k}{j}\) for all \(j, k \in \{0,1,2\}\) with
\(j \neq k\), as polynomials in \(\Gt{j}\) or~\(\Gs{j}\).  All other
products of two \(\alpha\)\nb-generators are zero.  The following
relations follow from those in
Theorem~\ref{the:generators_and_relations} and will be used in later
proofs:
{\allowdisplaybreaks[1]  
  \begin{align}
    \label{eq:tp}
    \Gt{j}^p &=1\qquad \text{for }j=0,2,\\
    \label{eq:sp}
    \Gs{j}^p &=1\qquad \text{for }j=1,2,\\
    \label{eq:ta_01}%
    \Gt{0} \Ga{0}{1}&= \Ga{0}{1},\\
    \label{eq:ta_20}%
    \Gt{2} \Ga{2}{0}&= \Ga{2}{0} \Gt{0},\\
    \label{eq:at_01}%
    \Ga{1}{0} \Gt{0}&= \Ga{1}{0},\\
    \label{eq:sa_10}%
    \Gs{1} \Ga{1}{0}&= \Ga{1}{0},\\
    \label{eq:sa_21}%
    \Gs{2}\Ga{2}{1}&= \Ga{2}{1} \Gs{1},\\
    \label{eq:as_01}%
    \Ga{0}{1} \Gs{1}&= \Ga{0}{1},\\
    \label{eq:Na_02}%
    N(\Gt{0}) \Ga{0}{2}&= 0,\\
    \label{eq:aN_20}%
    \Ga{2}{0} N(\Gt{0}) &= 0,\\
    \label{eq:Na_12}%
    N(\Gs{1})\Ga{1}{2}&= 0,\\
    \label{eq:aN_21}%
    \Ga{2}{1} N(\Gs{1}) &= 0.
  \end{align}
}%
We also need to characterise the range of the
invariant~\(U_{\mathfrak{C}^{\Z/p}}\).
Given a left \(\Kring_p\)\nb-module~\(\Mp\), write \(\Mp = \Mp_0
\oplus \Mp_1 \oplus \Mp_2\), corresponding to the decomposition
of~\(\Kring_p\).
Let~\(\Ga{j}{k}^\Mp\) for \(j\neq k\) also denote the map
\(\Mp_k \to \Mp_j\), \(y\mapsto \Ga{j}{k} y\).

\begin{definition}
  \label{def:exact_K-module}
  A left \(\Kring_p\)\nb-module~\(\Mp\) is \emph{exact} if the two
  sequences of abelian groups clockwise and counterclockwise around
  the following triangle are exact:
  \[
    \begin{tikzcd}[row sep =5em, column sep =2.5em]
      &\Mp_1 \ar[dr, shift right, "\Ga{2}{1}^\Mp"'] \ar[dl, shift right, "\Ga{0}{1}^\Mp"']&\\
      \Mp_0 \ar[rr, shift right, "\Ga{2}{0}^\Mp"'] \ar[ur, shift right, "\Ga{1}{0}^\Mp"']&&
      \Mp_2 \ar[ul, shift right, "\Ga{1}{2}^\Mp"'] \ar[ll, shift right, "\Ga{0}{2}^\Mp"']
    \end{tikzcd}
  \]
\end{definition}

\begin{theorem}[\cite{Koehler:Thesis}]
  \label{the:range_U}
  Let~\(\Mp\) be a countable \(\Z/2\)\nb-graded left
  \(\Kring_p\)\nb-module.  The following are equivalent:
  \begin{enumerate}[label=\textup{(\arabic*)}]
  \item \label{en:range_U_BG}%
    \(\Mp = U_{\mathfrak{C}^{\Z/p}}(A)\) for some~\(A\)
    in~\(\mathfrak{B}^{\Z/p}\);
  \item \label{en:range_U_KKG}%
    \(\Mp = U_{\mathfrak{C}^{\Z/p}}(A)\) for some~\(A\)
    in~\(\KK^{\Z/p}\);
  \item \label{en:range_U_exact}%
    \(\Mp\) is exact;
  \item \label{en:range_U_proj}%
    \(\Mp\) has a projective \(\Kring_p\)\nb-module resolution of
    length~\(1\).
  \end{enumerate}
\end{theorem}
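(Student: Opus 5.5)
We prove the cycle of implications \ref{en:range_U_BG}\(\Rightarrow\)\ref{en:range_U_KKG}\(\Rightarrow\)\ref{en:range_U_exact}\(\Rightarrow\)\ref{en:range_U_proj}\(\Rightarrow\)\ref{en:range_U_BG}. The first implication is trivial.

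For \ref{en:range_U_KKG}\(\Rightarrow\)\ref{en:range_U_exact}, we realise the generators \(\Ga{j}{k}\) between distinct objects of \(\mathfrak{C}^{\Z/p}\) by the maps of an exact triangle. By definition there is an exact triangle
\[
  \Sigma\Cont(\Z/p)\to C_p\to\C\to\Cont(\Z/p).
\]
With suitable indexing of \(C_p,\C,\Cont(\Z/p)\) as the vertices \(0,1,2\), the generators \(\Ga{j}{k}\) going one way around the triangle are exactly the maps of this triangle. The odd generators are the ones involving the suspension. Since \(\KK^{\Z/p}(-,A)\) is cohomological, the long exact sequence becomes the clockwise six-term sequence. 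The counterclockwise sequence comes from a second exact triangle. Duality \(\Cont(\Z/p)\cong\Ind\C\) with its self-adjointness gives the dual triangle \(\C\to\Cont(\Z/p)\to\Sigma C_p'\) with \(C_p'\cong C_p\), or the triangle obtained by rotating through the induction--restriction adjunction. Applying \(\KK(-,A)\) to it gives the other sequence. Checking that these maps match the remaining \(\alpha\)'s in Theorem~\ref{the:generators_and_relations} is bookkeeping with Köhler's computation.

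For \ref{en:range_U_exact}\(\Rightarrow\)\ref{en:range_U_proj}, which is the main obstacle, we follow Köhler and argue algebraically. The free modules \(\Kring_p\Gu{j}\) are the images \(U(\mathfrak{C}_j)\) of the generators, so they are exact. Given an exact countable module \(\Mp\), we choose an epimorphism from a countable free module \(P_0\to\Mp\), picking generators so that it is surjective on each \(\Mp_j\). The kernel \(K\) is exact by a five-lemma style argument, because both \(P_0\) and \(\Mp\) are exact. It remains to show that an exact module is projective as soon as it is a submodule of a free one, or more precisely that \(\Ext^2\) vanishes on exact modules. My plan is to reduce to understanding the local rings \(\Gu{j}\Kring_p\Gu{j}\). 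These are \(\Z[t]/(t^p-1)\)-type rings, described via \eqref{eq:tp} and \eqref{eq:sp}. Exactness together with relations \eqref{eq:alpha_010_N}--\eqref{eq:aN_21} should force \(K_1\) to be free over \(\Z\) and torsion-free in the appropriate sense. Then \(K\) decomposes into direct sums of the indecomposable projectives \(\Kring_p\Gu{j}\), which we show by a splitting argument starting at vertex~\(1\), where \(\C\) sits. A cleaner alternative is to use the abstract theory: since \(U\) is universal (Theorem~\ref{thm:truct}), exact modules which are kernels between projectives are realisable, so \(\Ext^2_{\Kring_p}(\Mp,-)\) can be computed through a geometric resolution. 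This gives vanishing once we know that \(\KK^{\Z/p}\) has \(\ideal\)-projective dimension at most~\(1\) on these objects.

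For \ref{en:range_U_proj}\(\Rightarrow\)\ref{en:range_U_BG}, we take a resolution \(0\to P_1\to P_0\to\Mp\to0\) by countable projectives. Countable projectives are summands of countable direct sums of \(\Kring_p\Gu{j}\), so each is realised as \(U\) of a countable direct sum of generators, or a summand of one. Summands exist in \(\mathfrak{B}^{\Z/p}\) because it is idempotent complete, having countable coproducts. The Yoneda-type identification \(\Hom(U(P_1'),U(P_0'))=\KK(P_1',P_0')\) for projective objects lets us lift \(P_1\to P_0\) to a morphism \(f\) in \(\mathfrak{B}^{\Z/p}\). Its cone \(A\) satisfies \(U(A)\cong\Mp\), because the long exact sequence and the injectivity of \(U(f)\) split it into short pieces.
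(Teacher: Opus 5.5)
The paper does not reprove this theorem; it quotes it from Köhler. Its proof of the square-free analogue, Proposition~\ref{pro:range-U_G}, runs through exactly your cycle of implications. Your arguments for \ref{en:range_U_BG}$\Rightarrow$\ref{en:range_U_KKG} and \ref{en:range_U_proj}$\Rightarrow$\ref{en:range_U_BG} are the standard ones; the latter is \cite{Bentmann-Meyer:More_general}*{Proposition~2.3}.

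For \ref{en:range_U_KKG}$\Rightarrow$\ref{en:range_U_exact} the idea is right, but the description of the second triangle is not. The counterclockwise sequence needs the map $\Cont(\Z/p)\to\C$, so the triangle must have the form $\Cont(\Z/p)\to\C\to C_p\to\Sigma\Cont(\Z/p)$. A triangle beginning with a map $\C\to\Cont(\Z/p)$, as you wrote it, only reproduces the clockwise sequence, because $\KK^{\Z/p}_0(\C,\Cont(\Z/p))\cong\Z$ is spanned by the unit map.

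The genuine gap is \ref{en:range_U_exact}$\Rightarrow$\ref{en:range_U_proj}, which is the whole substance of the theorem. Your reduction is sound. The kernel $K$ of a surjection from a countable free module onto $\Mp$ is exact: both sequences are $\Z/2$-graded cyclic complexes because $\Ga{j}{k}\Ga{k}{m}=0$, and the long exact homology sequence of $0\to K\to P_0\to\Mp\to0$ kills the homology of~$K$. Also $K$ is free abelian, since $\Kring_p$ is. What remains is to show that an exact $\Kring_p$-module that is free as an abelian group is projective. This is \cite{Koehler:Thesis}*{Theorem~12.4}, which the paper uses as a black box for the base case of Lemma~\ref{lem:ext1_van}, and your proposal does not prove it.

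\begin{itemize}
\item Freeness of $K$ over $\Z$ is automatic here, so it carries no information.
\item The target you aim for, splitting $K$ into shifted copies of the $\Kring_p\Gu{j}$, is stronger than projectivity and false in general, because the rings $\Gu{j}\Kring_p\Gu{j}$ are not local. For instance, $\Gu{1}\Kring_p\Gu{1}\cong\Z[\Z/p]$ has non-free finitely generated projectives $P$ whenever $\Z[\vartheta_p]$ has non-trivial class group (e.g.\ $p=23$). Comparing ranks of the three components, any such splitting of $\Kring_p\Gu{1}\otimes_{\Z[\Z/p]}P$ would be a single copy of $\Kring_p\Gu{1}$. Then its vertex-$1$ component would be free over $\Z[\Z/p]$, whereas it is $P$. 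Yet $\Kring_p\Gu{1}\otimes_{\Z[\Z/p]}P$ arises as your $K$: it is the kernel of $(\Kring_p\Gu{1})^n\to\Kring_p\Gu{1}\otimes_{\Z[\Z/p]}P'$ when $P\oplus P'\cong\Z[\Z/p]^n$.
\item Your ``cleaner alternative'' is circular. The $\ideal$-projective dimension of $A$ is measured by the projective dimension of $U(A)$, so assuming it is at most~$1$ is assuming~\ref{en:range_U_proj}.
\end{itemize}

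One has to show directly that every extension of $K$ splits. Note that the averaging argument in the proof of Lemma~\ref{lem:ext1_van}, run with $k=1$ from a $\Z$-linear section, never uses exactness and only yields $p^2[\sigma]=0$. The $p$-primary part, where exactness must genuinely enter, is exactly what your proposal leaves open.
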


This implies the following Universal Coefficient Theorem for
\(\KK^{\Z/p}\):

\begin{theorem}[\cite{Koehler:Thesis}]
  \label{the:Koehler_invariant_UCT_classifies}
  Let \(A\) and~\(C\) be separable \(\Z/p\)\nb-\(\Cst\)-algebras with
  \(A\in\mathfrak{B}^{\Z/p}\).  Then there is a natural short exact
  sequence
  \[
    \Ext^1_{\Kring_p}\bigl(U_{\mathfrak{C}^{\Z/p}}(\Sigma A),
    U_{\mathfrak{C}^{\Z/p}}(C)\bigr) \into
    \KK^{\Z/p}_*(A,C) \prto
    \Hom_{\Kring_p}\bigl(U_{\mathfrak{C}^{\Z/p}}(A),U_{\mathfrak{C}^{\Z/p}}(C)\bigr).
  \]
  If \(A,C\in\mathfrak{B}^{\Z/p}\), then every isomorphism
  \(U_{\mathfrak{C}^{\Z/p}}(A) \cong U_{\mathfrak{C}^{\Z/p}}(C)\) of
  \(\Z/2\)\nb-graded \(\Kring_p\)\nb-modules lifts to a
  \(\KK^{\Z/p}\)-equivalence in \(\KK^{\Z/p}_0(A,C)\).  In particular,
  \(A\) and~\(C\) are \(\KK^{\Z/p}\)-equivalent if and only if
  \(U_{\mathfrak{C}^{\Z/p}}(A) \cong U_{\mathfrak{C}^{\Z/p}}(C)\) as
  \(\Z/2\)\nb-graded \(\Kring_p\)\nb-modules.
\end{theorem}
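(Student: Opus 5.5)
The natural route is to apply the abstract UCT, Theorem~\ref{thm:truct}, with \(\mathfrak T=\KK^{\Z/p}\) and \(\mathfrak C=\mathfrak C^{\Z/p}\), and then to upgrade it to the lifting statement by the usual five-lemma argument.

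\emph{Hypotheses of Theorem~\ref{thm:truct}.} The objects \(C_p\), \(\C\) and \(\Cont(\Z/p)\) are separable, hence \(\aleph_1\)-compact in \(\KK^{\Z/p}\), and the relevant \(\KK\)-groups with a separable second variable are countable. The category \(\mathfrak{Mod}((\mathfrak C^{\Z/p})^\op)_{\aleph_1}\) is identified with countable \(\Z/2\)-graded left \(\Kring_p\)-modules.

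\emph{Generation.} Next we need \(\mathfrak{B}^{\Z/p}=\langle\mathfrak C^{\Z/p}\rangle\).
\begin{itemize}
\item By Theorem~\ref{thm:generators}, \(\mathfrak B^{\Z/p}\) is generated by \(\Cont(\Z/p/H)\) for the cyclic subgroups \(H\in\{1,\Z/p\}\), that is, by \(\Cont(\Z/p)\) and~\(\C\).
\item Both of these lie in \(\mathfrak C^{\Z/p}\).
\item Conversely, \(C_p\) is a mapping cone of a morphism between them, so it lies in \(\mathfrak B^{\Z/p}\).
\end{itemize}
Hence the two localising subcategories coincide.

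\emph{The exact sequence.} For \(A\in\mathfrak B^{\Z/p}\), the module \(U_{\mathfrak C^{\Z/p}}(A)\) has a length-one projective resolution. This follows from Theorem~\ref{the:range_U}, via \ref{en:range_U_BG}\(\Rightarrow\)\ref{en:range_U_proj}. Theorem~\ref{thm:truct} then gives the natural short exact sequence in each degree, with \(\KK^{\Z/p}_*\) assembling both parities because \(\Sigma^2\cong\mathrm{id}\).

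\emph{Lifting isomorphisms.} Let \(A,C\in\mathfrak B^{\Z/p}\) and let \(\phi\colon U(A)\to U(C)\) be an isomorphism.
\begin{enumerate}
\item Surjectivity in the sequence gives \(f\in\KK^{\Z/p}_0(A,C)\) with \(U(f)=\phi\).
\item Form an exact triangle \(A\to C\to D\to\Sigma A\) extending~\(f\). Then \(D\in\mathfrak B^{\Z/p}\), since it is a thick subcategory.
\item The long exact sequence of the homological functor~\(U\), together with bijectivity of \(U(f)\) in both parities, gives \(U(D)=0\).
\item Applying the UCT sequence to \(D\) with second variable \(D\) yields \(\KK^{\Z/p}_*(D,D)=0\). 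Hence \(\mathrm{id}_D=0\), so \(D\cong0\) and \(f\) is a \(\KK^{\Z/p}\)-equivalence.
\end{enumerate}
The ``if and only if'' follows immediately, since a \(\KK^{\Z/p}\)-equivalence induces an isomorphism of invariants by functoriality.

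\emph{Main obstacle.} The substantive step is the length-one projective resolution for modules in the range, which is Köhler's computation, here imported as Theorem~\ref{the:range_U}. Everything else is formal once one checks that \(\langle\mathfrak C^{\Z/p}\rangle\) equals the bootstrap class, so that the abstract theorem applies to every \(A\in\mathfrak B^{\Z/p}\).
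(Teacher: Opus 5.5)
Your proposal is correct and follows the paper's proof, which simply combines Theorem~\ref{the:range_U} (membership in the bootstrap class implies a length-one projective resolution) with the abstract UCT of Theorem~\ref{thm:truct}. Your check that \(\langle\mathfrak{C}^{\Z/p}\rangle=\mathfrak{B}^{\Z/p}\) and your cone argument for lifting isomorphisms correctly fill in details the paper leaves implicit.

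One justification is wrong, though it is easily repaired: separability does not imply \(\aleph_1\)-compactness in \(\KK^G\). For example, \(c_0=\bigoplus_{n\ge1}\C\) is not \(\aleph_1\)-compact in \(\KK\). Indeed, \(\KK(c_0,c_0)\cong\prod_m\bigoplus_n\Z\), and the identity does not come from \(\bigoplus_n\KK(c_0,\C)\cong\bigoplus_n\prod_m\Z\).

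The generators are \(\aleph_1\)-compact for a specific reason:
\begin{itemize}
\item \(\KK^{\Z/p}_*(\C,{-})\cong\K_*({-}\rtimes\Z/p)\) by Green--Julg, and this commutes with countable direct sums.
\item \(\KK^{\Z/p}_*(\Cont(\Z/p),{-})\cong\K_*({-})\) by the induction--restriction adjunction, and this also commutes with countable direct sums.
\item \(C_p\) inherits the property from the exact triangle \(\Sigma\Cont(\Z/p)\to C_p\to\C\to\Cont(\Z/p)\) by the five lemma.
\end{itemize}
These same identifications are what make \(\KK^{\Z/p}_*(C,B)\) countable for \(C\in\mathfrak{C}^{\Z/p}\) and \(B\) separable. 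Countability does not hold for arbitrary separable first variables: \(\KK^1\) can contain \(\Ext(\Q,\Z)\), which is uncountable.
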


\begin{proof}
  This follows from Theorem~\ref{the:range_U} and
  Theorem~\ref{thm:truct}.
\end{proof}

\section{Equivariant Kasparov groups for products of groups}
\label{sec:KK_product}

A cyclic group of square-free order is a product of cyclic groups of
prime order.  To deal with equivariant KK-groups for such groups, we
shall use the following proposition, which computes certain
equivariant KK-groups for products of groups.

\begin{definition}
  A \(\Cst\)\nb-algebra \(A\in\KK^G\) is \emph{Poincar\'e dualisable}
  if there is a \(\Cst\)\nb-algebra \(A'\in\KK^G\), called its
  \emph{Poincar\'e dual}, such that
  \(\KK^G(B\otimes A,C)\cong\KK^G(B,C\otimes A')\) for all
  \(B,C\in\KK^G\).
\end{definition}

\begin{proposition}
  \label{pro:isom}
  Let \(H\) and~\(G\) be finite groups.  Let
  \(A_1,A_2 \in \mathfrak{B}^H\) and \(B_1,B_2 \in \mathfrak{B}^G\).
  Assume that \(A_1\) and~\(B_1\) are Poincar\'e dualisable and that
  the abelian groups \(\KK^H(A_1,A_2)\) and \(\KK^G(B_1,B_2)\) are
  torsionfree.  Then the external tensor product map induces an
  isomorphism
  \[
    \KK^H(A_1,A_2)\otimes_\Z \KK^G(B_1,B_2)
    \xrightarrow{\cong}
    \KK^{H\times G}( A_1\otimes B_1, A_2\otimes B_2 ).
  \]
\end{proposition}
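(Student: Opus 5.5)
Use Poincaré duality to move \(A_1\) and~\(B_1\) into the second variable, so that both sides become \(K\)-theory groups, and then apply a Künneth theorem. Let \(A_1'\) and \(B_1'\) be the Poincaré duals. Then
\[
  \KK^H(A_1,A_2)\cong \KK^H(\C,A_2\otimes A_1') = \K_0^H(A_2\otimes A_1'),
  \qquad
  \KK^G(B_1,B_2)\cong \K_0^G(B_2\otimes B_1').
\]
For the product group, the exterior product \(A_1'\otimes B_1'\) should be a Poincaré dual of \(A_1\otimes B_1\) in \(\KK^{H\times G}\). To see this, take the duality data (unit and counit classes) for \(A_1\) and~\(B_1\), inflate them to \(H\times G\), and tensor them; the zigzag identities are preserved under exterior tensor products. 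Hence
\[
  \KK^{H\times G}(A_1\otimes B_1,A_2\otimes B_2)\cong \K_0^{H\times G}\bigl((A_2\otimes A_1')\otimes(B_2\otimes B_1')\bigr).
\]
One must check that these isomorphisms intertwine the external tensor product maps. This is formal once the duality is given by unit and counit classes, because the Kasparov product is compatible with exterior products.

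The problem is thus reduced to a Künneth statement. For \(D\in\mathfrak B^H\) and \(E\in\mathfrak B^G\), we need that \(\K_*^H(D)\otimes\K_*^G(E)\to\K_*^{H\times G}(D\otimes E)\) is an isomorphism when the relevant groups are torsionfree. Note that \(A_2\otimes A_1'\in\mathfrak B^H\) if the duals lie in the bootstrap class; this holds, for example, because duals of objects in a localising subcategory generated by dualisable objects stay there, or we may simply verify it for the concrete objects used later.

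To prove the Künneth statement, fix~\(E\) and consider both sides as homological functors of~\(D\) that commute with countable direct sums. By Theorem~\ref{thm:generators}, it suffices to treat \(D=\Cont(H/L)\) with \(L\) cyclic. By symmetry we may likewise reduce \(E\) to \(\Cont(G/M)\). For such generators, \(\K^{H\times G}_*(\Cont(H/L)\otimes\Cont(G/M)) = \K^{L\times M}_*(\C)=R(L\times M)\cong R(L)\otimes R(M)\), which is the classical fact that representations of products are tensor products of representations.

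The main obstacle is exactness. Tensoring with \(\K^G_*(E)\) is not exact in general, so a five-lemma argument on triangles requires flatness. I would handle this the way Rosenberg--Schochet do. Since \(\K_*^G(E)\) is a module over the representation ring, I would first prove a Künneth short exact sequence, with a \(\mathrm{Tor}\) term, using a geometric projective resolution of length~\(1\) built from sums of generators. This reduction to length one uses torsionfreeness only over~\(\Z\); the tensor product in the statement is over~\(\Z\), so we realise \(D\) via a resolution \(0\to P_1\to P_0\to \K^H_*(D)\) by free abelian groups with \(R(H)\)-action. Alternatively, I would restrict to the torsionfree case directly: when \(\K_0^H(A_2\otimes A_1')\) is torsionfree, it is flat over~\(\Z\), the \(\mathrm{Tor}\)-term vanishes, and only the degree-\(0\) part gives the stated isomorphism. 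This requires checking that the odd \(\mathrm{Tor}\) contributions vanish. This is the delicate point, and I would try to show it by computing the odd parts for the specific dualisable objects if the general argument fails.
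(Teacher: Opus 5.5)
\paragraph{There is a genuine gap, in the K\"unneth step.}
Your reduction by Poincar\'e duality, including the check that \(A_1'\otimes B_1'\) is a dual of \(A_1\otimes B_1\) compatibly with exterior products, agrees with the paper. The gap is the K\"unneth step, which carries all the content and which you leave at ``I would try''. Your Rosenberg--Schochet plan does not close it as described.
\begin{enumerate}
\item If \(P_0\) and \(P_1\) are both to be \(\K^H_*\) of sums of generators, the resolution must consist of summands of free \(R(H)\)-modules. Such resolutions need not have length~\(1\): already \(\Z[\Z/p]\) has infinite global dimension, which is why K\"ohler's invariant is needed at all.
\item If instead you only ask that \(P_1=\ker\bigl(\K^H_*(\bigoplus\C)\to\K^H_*(D)\bigr)\) be free abelian (it is, since \(R(H)\) is free abelian), then \(P_1\) is realised by a fibre \(N\in\mathfrak{B}^H\) that is not a sum of generators. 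You then need the K\"unneth isomorphism for objects with torsion-free equivariant \(\K\)-theory. That is essentially the proposition itself, and you never prove it.
\end{enumerate}
Your ``alternative'' presupposes a K\"unneth sequence with a \(\mathrm{Tor}\)-term that has not been established.

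\paragraph{How to close it.}
First, read the statement \(\Z/2\)-graded. It must be read this way for the application to the graded rings \(\Kring_G\), and in degree~\(0\) the target also receives products of two odd classes. Then \(\K^G_*(E)\cong\KK^G_*(B_1,B_2)\) is torsion-free, hence flat, and the argument runs as follows.
\begin{itemize}
\item \(D\mapsto\K^H_*(D)\otimes_\Z\K^G_*(E)\) is homological and commutes with countable direct sums.
\item Hence the class of \(D\) on which the exterior product is bijective is localising.
\item For \(D=\Cont(H/L)\), bijectivity holds for every~\(E\), because \(\K^{H\times G}_*(\Cont(H/L)\otimes E)\cong\K^{L\times G}_*(E)\cong R(L)\otimes_\Z\K^G_*(E)\) when \(L\) acts trivially on~\(E\).
\end{itemize}
No resolution and no \(\mathrm{Tor}\)-term are needed.

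\paragraph{The paper's route.}
The paper avoids equivariant K\"unneth arguments altogether, using three facts.
\begin{itemize}
\item Green--Julg gives \(\K^H_*(D)\cong\K_*(D\rtimes H)\).
\item \(D\rtimes H\) lies in the nonequivariant bootstrap class when \(D\in\mathfrak{B}^H\); this is where \(A_1'\in\mathfrak{B}^H\) is used.
\item \((D\rtimes H)\otimes(E\rtimes G)\cong(D\otimes E)\rtimes(H\times G)\).
\end{itemize}
The classical Rosenberg--Schochet K\"unneth theorem then applies, torsion-freeness kills its \(\mathrm{Tor}\)-term, and one checks that the composite isomorphism is the exterior product.
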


\begin{proof}
  Let \(A_1'\) and~\(B_1'\) be the Poincar\'e duals of \(A_1\)
  and~\(B_1\), respectively.  Duality gives natural isomorphisms
  \begin{align*}
    \pi_{A_1}\colon \KK^H(A_1,A_2)
    &\xrightarrow{\cong}\KK^H(\C,\,A_2\otimes A_1'),\\
    \pi_{B_1}\colon \KK^G(B_1,B_2)
    &\xrightarrow{\cong}\KK^G(\C,\,B_2\otimes B_1').
  \end{align*}
  Next we apply the Green--Julg isomorphisms for finite groups,
  \begin{align*}
    \mu_H\colon \KK^H(\C,\,A_2\otimes A_1')
    &\xrightarrow{\cong}
      \K_0\bigl((A_2\otimes A_1')\rtimes H\bigr),\\\
    \mu_G\colon \KK^G(\C,\,B_2\otimes B_1')
    &\xrightarrow{\cong}
      \K_0\bigl((B_2\otimes B_1')\rtimes G\bigr).
  \end{align*}
  It follows that
  \[
    \KK^H(A_1,A_2)\cong\K_0\bigl((A_2\otimes A_1')\rtimes H\bigr)\text{ and }
    \KK^G(B_1,B_2)\cong\K_0\bigl((B_2\otimes B_1')\rtimes G\bigr).
  \]

  We claim that the Poincar\'e dual of an object in the equivariant
  bootstrap class also belongs to the equivariant bootstrap class.
  This follows from \cite{dellAmbrogio:Cell_G}*{Proposition~2.9},
  where dualisable objects are called \emph{rigid}.  This is because
  any object that is dualisable in \(\KK^G\) must be compact, and by
  \cite{dellAmbrogio:Cell_G}*{Proposition~2.9} all those compact
  objects already have a dual in the bootstrap class.  The equivariant
  bootstrap class~\(\mathfrak{B}^G\) is closed under tensor products
  because all tensor products of the generators
  \(\Cont(G/H) \otimes \Cont(G/L)\) belong to~\(\mathfrak{B}^G\).
  Therefore, \(B_2\otimes B_1' \in \mathfrak{B}^G\).  This implies
  that the crossed product \((B_2\otimes B_1')\rtimes G\) belongs to
  the nonequivariant bootstrap class.  Similarly,
  \((A_2\otimes A_1')\rtimes H\) belongs to the bootstrap class.  Then
  the K\"unneth Theorem applies to the tensor product of these two
  crossed products.  Since the abelian groups \(\KK^H(A_1,A_2)\) and
  \(\KK^G(B_1,B_2)\) are torsionfree by assumption, the Tor-term in
  the K\"unneth formula vanishes.  Thus the following map is an
  isomorphism:
  \begin{multline*}
    \alpha\colon
    \K_0\bigl(( A_2\otimes A_1')\rtimes H\bigr)\otimes_\Z
    \K_0 \bigl(( B_2\otimes B_1')\rtimes G\bigr)
    \\ \to
    \K_0\Bigl(  \bigl(( A_2\otimes A_1')\rtimes H\bigr) \otimes
    \bigl(( B_2\otimes B_1')\rtimes G\bigr)\Bigr).
  \end{multline*}
  The universal property for crossed products implies that there is a
  canonical isomorphism
  \[
    \Phi\colon \bigl((A_2\otimes A_1')\rtimes H\bigr)\otimes
    \bigl((B_2\otimes B_1')\rtimes G\bigr)
    \xrightarrow{\cong}(A_2\otimes B_2\otimes A_1'\otimes B_1')\rtimes(H\times G),
  \]
  where the action of \(H\times G\) on \(A_2\otimes B_2\otimes
  A_1'\otimes B_1'\) is the diagonal one induced by the given actions.
  The inverse \(\mu^{-1}_{H\times G}\) of the Green--Julg isomorphism
  and the inverse Poincar\'e duality isomorphism
  \(\pi^{-1}_{A_1\otimes B_1}\) give isomorphisms
  \begin{multline*}
    \K_0\bigl(( A_2\otimes B_2 \otimes A_1'\otimes B_1')
    \rtimes (H\times G)\bigr)
    \cong \KK^{H\times G}(\C,A_2\otimes B_2 \otimes A_1'\otimes B_1')
    \\ \cong \KK^{H\times G}( A_1\otimes B_1, A_2\otimes B_2 ).
  \end{multline*}
  Summing up, we constructed an isomorphism
  \[
    \KK^H(A_1,A_2)\otimes_\Z \KK^G(B_1,B_2)
    \xrightarrow{\psi} \KK^{H\times G}( A_1\otimes B_1, A_2\otimes B_2 )
  \]
  with
  \(\psi =\pi^{-1}_{A_1\otimes B_1}\circ \mu^{-1}_{H\times G}\circ
  \Phi_*\circ \alpha\circ \bigl((\mu_H\circ\pi_{A_1})\otimes
  (\mu_G\circ\pi_{B_1})\bigr)\).  It remains to show that this map is
  the external tensor product map.  The external Kasparov product is
  compatible with Poincar\'e duality and the Green--Julg isomorphism; that
  is
  \begin{align*}
    \pi_{A_1\otimes B_1}(x\otimes_{\ext} y)
    &= \pi_{A_1}(x)\otimes_{\ext}\pi_{B_1}(y),\\
    \mu_{H\times G}\bigl(\xi\otimes_{\ext}\eta\bigr)
    &= \Phi_*\bigl(\mu_H(\xi)\otimes \mu_G(\eta)\bigr).
  \end{align*}
  Therefore,
  \begin{align*}
    (\mu_{H\times G}\circ \pi_{A_1\otimes B_1})(x \otimes_{\ext} y)
    &= \mu_{H\times G}(\pi_{A_1}(x) \otimes_{\ext} \pi_{B_1}(y)) \\
    &= \Phi_*(\alpha(\mu_H(\pi_{A_1}(x)) \otimes \mu_G(\pi_{B_1}(y))))
    \\ &= \Phi_*\circ \alpha\circ ((\mu_H\circ\pi_{A_1})
         \otimes (\mu_G\circ\pi_{B_1}))(x \otimes y).
  \end{align*}
  Using this identity, we substitute
  \begin{align*}
    \psi(x \otimes y)
    & =  \pi^{-1}_{A_1\otimes B_1}\circ \mu^{-1}_{H\times G}
      \circ \Phi_* \circ \alpha\circ ((\mu_H\circ\pi_{A_1})
      \otimes (\mu_G\circ\pi_{B_1}))(x \otimes y) \\
    & =  \pi^{-1}_{A_1\otimes B_1}\circ \mu^{-1}_{H\times G}
      \circ  (\mu_{H\times G}\circ \pi_{A_1\otimes B_1})
      (x \otimes_{\ext} y)
      = x\otimes_{\ext}y.\qedhere
  \end{align*}
\end{proof}

\section{Extension to cyclic groups of square-free order}
\label{sec:extension}

In this section, we prove a Universal Coefficient Theorem for a cyclic
group~\(G\) of square-free order by reducing to the case of cyclic
groups of prime order.
Fix a presentation
\[
  G\cong \Z/p_1\times\Z/p_2\times \dots \times \Z/p_k,
\]
where \(p_1, \dotsc, p_k\) are distinct prime numbers.

We define an invariant as follows.  For each prime~\(p_i\),
let~\(C_{p_i}\) denote the mapping cone of the \(G\)\nb-equivariant
unital embedding \(\C\to \Cont(\Z/p_i)\).  Define
\[
  \mathfrak{C}^G \defeq
  \setgiven[\bigl]{ A_1 \otimes A_2 \otimes \dots \otimes A_k}{A_j \in \{ C_{p_j},\ \C,\ \Cont(\Z/p_j) \}}.
\]
Here \(A_1 \otimes A_2 \otimes \dots \otimes A_k\) carries the action
of~\(G\) induced by the \(\Z/p_j\)\nb-action on~\(A_j\) for
\(j=1,\dotsc,k\).

If \(H \subseteq G\) is a cyclic subgroup, then
\(H= H_1 \times \dotsb \times H_k\) with \(H_j =\{1\}\) or
\(H_j = \Z/p_j\) for \(j=1,\dotsc,k\).  Then
\(\Cont(G/H) = A_1 \otimes \dotsb \otimes A_k\) with
\(A_j = \Cont(\Z/p_j)\) or \(A_j = \C\) for \(j=1,\dotsc,k\).  Thus
\(\Cont(G/H)\in \mathfrak{C}^G\) for all cyclic subgroups
\(H\subseteq G\).

Tensoring the exact triangle
\[
  \Sigma \Cont(\Z/p_j) \to C_{p_j}\to \C\to \Cont(\Z/p_j)
\]
with \(A_i\) as above shows that
\[
  \bigl\langle \setgiven[\bigl]{ A_1 \otimes A_2 \otimes \dots \otimes
    A_k}{A_i \in \{\C,\, \Cont(\Z/p_i) \}} \bigr\rangle
  = \langle \mathfrak{C}^G \rangle.
\]
Therefore, \(\mathfrak{C}^G\) generates the \(G\)\nb-equivariant
bootstrap class, \(\langle \mathfrak{C}^G \rangle = \mathfrak B^G\).

We denote the \(\Z/2\)\nb-graded category ring of
\((\mathfrak{C}^G)^\op\) by~\(\Kring_G\) or~\(\Kring_{p_1\dotsm
  p_k}\).
For \(G=\Z/p\), this is the ring introduced by Manuel K\"ohler (see
Section~\ref{sec:kohler}).

\begin{remark}
  It is crucial that~\(G\) has square-free order.
  If
  \[
    G\cong \Z/p_1\times\Z/p_2\times \dots \times \Z/p_k
  \]
  but the primes~\(p_j\) are not all distinct, then there is a
  subgroup~\(H\) that is not of product type as above.
  Then the generator \(\Cont(G/H)\) of~\(\mathfrak{B}^G\) does not belong
  to~\(\mathfrak{C}^G\).
  By Lemma~\ref{lem:K_does_not_detect}, the localising subcategory
  generated by the generators in~\(\mathfrak{C}^G\)
  is smaller than the equivariant bootstrap class.
  Therefore, the invariant~\(U_{\mathfrak{C}^G}\) cannot give a
  Universal Coefficient Theorem for~\(G\) that holds on the entire
  bootstrap class.
\end{remark}

\begin{corollary}
  \label{cor:isom}
  Let \(p_1,\dotsc, p_k\) be distinct primes, and let
  \(G \cong \Z/p_1 \times \dots \times \Z/p_k\).  The external tensor
  product induces an isomorphism of rings
  \(\bigotimes_{i=1}^k \Kring_{p_i}\cong \Kring_{p_1\dotsm p_k}\),
  where the tensor product is taken over the ring of integers.
\end{corollary}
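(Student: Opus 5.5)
The plan is to apply Proposition~\ref{pro:isom} inductively on~\(k\), once for each pair of generators, and then sum over all pairs.

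Write \(G = H\times \Z/p_k\) with \(H = \Z/p_1\times\dots\times\Z/p_{k-1}\). Every object of \(\mathfrak{C}^G\) has the form \(A\otimes B\) with \(A\in\mathfrak{C}^H\) and \(B\in\mathfrak{C}^{\Z/p_k}\), and conversely. By additivity of \(\KK\) in both variables,
\[
  \Kring_G \cong \bigoplus_{A_1,A_2,B_1,B_2}
  \KK^{H\times\Z/p_k}_*(A_1\otimes B_1, A_2\otimes B_2)^\op ,
\]
where \(A_i\) ranges over \(\mathfrak{C}^H\) and \(B_i\) over \(\mathfrak{C}^{\Z/p_k}\). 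The analogous decomposition holds for \(\Kring_H\otimes\Kring_{p_k}\). It therefore suffices to show that each summand map
\[
  \KK^H_*(A_1,A_2)\otimes\KK^{\Z/p_k}_*(B_1,B_2)\to\KK^G_*(A_1\otimes B_1,A_2\otimes B_2)
\]
is an isomorphism. To handle the odd parts, I replace \(A_2\) or \(B_2\) by suspensions and use the \(\Z/2\)-graded form of Proposition~\ref{pro:isom}. The external product is multiplicative with the Koszul sign, which is the standard sign convention for the graded tensor product of rings. Hence the summand isomorphisms assemble into a ring isomorphism \(\Kring_H\otimes\Kring_{p_k}\cong\Kring_G\), and induction gives \(\bigotimes_i\Kring_{p_i}\cong\Kring_G\).

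To apply Proposition~\ref{pro:isom}, I check its hypotheses for the generators.

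\emph{Bootstrap class.} All generators lie in the bootstrap classes, as shown above.

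\emph{Poincaré dualisability.} For \(\C\) this is trivial. For \(\Cont(\Z/p)\) it holds because \(\Cont(\Z/p)\) is finite and self-dual, with duality given by the diagonal/trace maps; this reflects the self-duality of induction \(\Ind=\operatorname{CoInd}\) for finite groups. The mapping cone \(C_p\) is dualisable because dualisable objects form a thick subcategory closed under cones. Tensor products of dualisable objects are dualisable, with dual the tensor product of the duals. This covers \(A_1\in\mathfrak{C}^H\) by induction.

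\emph{Torsionfreeness.} This is where the main obstacle lies: the groups \(\KK^H_*(A_1,A_2)\) and \(\KK^{\Z/p}_*(B_1,B_2)\) must be torsionfree, including in odd degree. For \(\Z/p\) I would read this off Köhler's computation in Theorem~\ref{the:generators_and_relations}. Each corner \(1_j\Kring_p1_k\) is an explicit free abelian group; for instance, \(1_2\Kring_p1_2\) is spanned by powers of \(t_2\) and \(s_2\) and is isomorphic to \(\Z[\Z/p]\oplus\Z[\Z/p]\) modulo the identification of the units, which is still free. I would verify this by recalling the explicit additive bases in \cite{Koehler:Thesis}, \cite{Meyer:Actions_Kirchberg}. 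For \(H\), torsionfreeness of \(\Kring_H\) follows inductively, because the corner groups of \(\Kring_H\) are tensor products of free abelian groups by the statement already proven for fewer primes.

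A subtlety remains: the proof of Proposition~\ref{pro:isom} uses Künneth on \(\K_*\) of crossed products, so odd-degree torsion could also produce Tor terms. I would run that argument in the \(\Z/2\)-graded setting, so that torsionfreeness of all graded pieces makes every Tor term vanish. Once this is done, the corollary follows.
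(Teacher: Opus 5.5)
Your proposal is correct and follows the paper's route: establish Poincar\'e dualisability of the generators, apply Proposition~\ref{pro:isom} \(k-1\) times by induction, and use multiplicativity of the external product to get a ring isomorphism. The paper cites Dell'Ambrogio's Proposition~2.9 for dualisability where you argue via thick subcategories, and your explicit checks of torsionfreeness of the corners of \(\Kring_p\) and of the \(\Z/2\)-graded K\"unneth theorem fill in points the paper's short proof leaves implicit. One small slip does not affect your argument: \(1_2\Kring_p1_2\) also satisfies \(p\cdot 1_2=N(t_2)+N(s_2)\), so it is free of rank \(2p-2\), not \(2p-1\), but it is still free.
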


\begin{proof}
  All objects in~\(\mathfrak{C}^G\) are Poincar\'e dualisable by
  \cite{dellAmbrogio:Cell_G}*{Proposition~2.9}, where these are
  referred to as \emph{rigid} objects.  Then,
  Proposition~\ref{pro:isom} applied \(k-1\) times implies, by
  induction, that the external tensor product induces a group
  isomorphism
  \(\bigotimes_{i=1}^k \Kring_{p_i}\cong \Kring_{p_1\dotsm p_k}\).
  This is even a ring isomorphism because the external tensor product
  preserves Kasparov products.
\end{proof}

We now generalise Definition~\ref{def:exact_K-module} to finite cyclic
groups of square-free order.

\begin{definition}
  \label{def:exact_module}
  Let~\(\MG\) be a left module over~\(\Kring_G\).
  The external tensor product map induces canonical maps
  \(\Kring_{p_m} \to \bigotimes_{i=1}^k \Kring_{p_i} \cong \Kring_G\)
  for every \(m=1,\dotsc,k\).
  We call~\(\MG\) \emph{exact} if it is exact as a
  \(\Kring_{p_m}\)\nb-module for \(m=1,\dotsc,k\).
\end{definition}

Any \(\Kring_G\)\nb-module of the form \(U_{\mathfrak{C}^G}(A)\) for
\(A\in\KK^G\) is exact: this follows from the corresponding statement
for cyclic groups of prime order.

\begin{lemma}
  \label{lem:ext1_van}
  Let~\(\MG\) be an exact left \(\Kring_G\)\nb-module that is countably
  generated and free as an abelian group.
  Then \(\Ext^1_{\Kring_{G}}(\MG,\MG')= 0\) for any left
  \(\Kring_{G}\)-module~\(\MG'\).
  Therefore, \(\MG\) is a projective module over~\(\Kring_G\).
\end{lemma}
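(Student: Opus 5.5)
The plan is induction on the number~\(k\) of prime factors, using Corollary~\ref{cor:isom} to write \(\Kring_G \cong \Kring_{p_1}\otimes_\Z R\) with \(R \defeq \Kring_{p_2\dotsm p_k}\); the factors \(\Kring_{p_1}\) and~\(R\) commute inside~\(\Kring_G\).

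\textbf{Base case.} For \(k=1\), take an exact, countably generated, \(\Z\)\nb-free \(\Kring_p\)\nb-module~\(\MG\). I would use the explicit length-one resolution behind Theorem~\ref{the:range_U}:
\[
  0\to P_1\to P_0\to \MG\to 0,\qquad
  P_0=\bigoplus_{j=0}^2 \Kring_p\Gu{j}\otimes_\Z \MG_j .
\]
I would check directly, using the relations \eqref{eq:tp}--\eqref{eq:aN_21} and exactness of~\(\MG\), that this resolution splits when all~\(\MG_j\) are free abelian. The underlying reason is that the obstruction to splitting lives in \(\Ext^1_\Z\) of the groups~\(\MG_j\), which vanishes. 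This gives \(\Ext^1_{\Kring_p}(\MG,{-})=0\), so~\(\MG\) is projective.

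\textbf{Inductive step.} Let \(\MG\) be an exact, \(\Z\)\nb-free, countably generated \(\Kring_G\)\nb-module. Regard it as a \(\Kring_{p_1}\)\nb-module with a commuting \(R\)\nb-action. The decisive point is to make the \(\Kring_{p_1}\)\nb-resolution from the base case \emph{natural} in~\(\MG\).
\begin{enumerate}
\item Each \(P_i\) should be built functorially from the groups \(\MG_j=\Gu{j}\MG\) and the maps~\(\Ga{j}{k}^\MG\). These groups and maps are \(R\)\nb-modules and \(R\)\nb-linear maps, because \(R\) commutes with \(\Gu{j}\) and~\(\Ga{j}{k}\).
\item Hence the whole sequence is one of \(\Kring_G\)\nb-modules, with \(P_0=\bigoplus_j \Kring_{p_1}\Gu{j}\otimes_\Z \MG_j\) and \(P_1\) of the same shape, \(\Kring_{p_1}\Gu{j}\otimes_\Z N_j\) with \(N_j\) built from kernels and images of the \(\alpha\)\nb-maps.
\item Since \(\Kring_{p_1}\Gu{j}\) is \(\Z\)\nb-free, base change gives
\[
  \Ext^n_{\Kring_G}\bigl(\Kring_{p_1}\Gu{j}\otimes_\Z N,\MG'\bigr)\cong \Ext^n_R\bigl(N,\Gu{j}\MG'\bigr).
\]
\end{enumerate}

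\textbf{Projectivity of the terms.} For the terms \(\Kring_{p_1}\Gu{j}\otimes_\Z N\) to be projective over~\(\Kring_G\), it suffices that each \(R\)\nb-module~\(N\) involved is exact, \(\Z\)\nb-free and countably generated.
\begin{enumerate}
\item For \(N=\MG_j\): freeness and countability are inherited, and \(R\)\nb-exactness holds because \(\MG_j\) is a direct summand of~\(\MG\) compatible with the \(R\)\nb-structure.
\item For the \(N_j\) in~\(P_1\): these are subgroups of free groups, hence free.
\item Their \(R\)\nb-exactness follows because, by the induction hypothesis, the relevant \(R\)\nb-sequences of exact \(\Z\)\nb-free modules split \(R\)\nb-linearly, so kernels and images are \(R\)\nb-direct summands of exact modules.
\end{enumerate}
The induction hypothesis then makes these~\(N\) projective over~\(R\), so \(P_0\) and~\(P_1\) are projective over~\(\Kring_G\). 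Finally, the \(\Kring_{p_1}\)\nb-splitting from the base case is itself given by natural, hence \(R\)\nb-linear, formulas in the~\(\MG_j\). So the resolution splits over~\(\Kring_G\), and \(\MG\) is a direct summand of~\(P_0\). Therefore \(\MG\) is projective and \(\Ext^1_{\Kring_G}(\MG,\MG')=0\) for every~\(\MG'\).

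\textbf{Main obstacle.} I expect the hardest step to be the naturality of the base-case splitting: one must produce a \(\Kring_p\)\nb-splitting of \(P_0\to\MG\) by a formula in the~\(\Ga{j}{k}^\MG\) rather than by a choice of bases. My first attempt would use the idempotent-like elements \(N(\Gt{0})\), \(N(\Gs{1})\) and the relation \eqref{eq:alpha_2_N}. If this fails, I would instead use the cruder resolution \(0\to K\to\Kring_G\otimes_\Z\MG\to\MG\to0\) together with \(\Ext^{\ge1}_{\Kring_G}(\Kring_G\otimes_\Z\MG,{-})=\Ext^{\ge1}_\Z(\MG,{-})=0\). I would then try to show that this sequence splits, by splitting it first over~\(\Kring_{p_1}\) and then over~\(R\) via the induction hypothesis.
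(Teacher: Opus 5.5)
Your reduction does not prove the lemma. Everything hinges on the sentence claiming that the \(\Kring_{p_1}\)-splitting of \(P_0\to\MG\) is given by natural, hence \(R\)-linear, formulas, and such formulas cannot exist. Write \(S=\Z\Gu{0}\oplus\Z\Gu{1}\oplus\Z\Gu{2}\), so that \(P_0=\Kring_{p_1}\otimes_S\MG\).

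A section that is natural with respect to the endomorphisms (right multiplications) of the free module \(\Kring_{p_1}\) is the same as a bimodule section \(\Kring_{p_1}\to\Kring_{p_1}\otimes_S\Kring_{p_1}\) of the multiplication map. Such a section would make every \(\Kring_{p_1}\)-module \(M\) a direct summand of \(\Kring_{p_1}\otimes_S M\). Hence every module with free abelian entries would be projective, every module would have projective dimension at most~\(1\), and so every module would be exact by Proposition~\ref{pro:range-U_G}. But \(M_0=M_2=\Z/p_1\), \(M_1=0\), with all \(\Ga{j}{k}=0\), is a non-exact module. So any \(\Kring_{p_1}\)-splitting involves choices, and \(R\)-linearity of such a choice is exactly what the lemma asserts. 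The same example shows that the obstruction in your base case cannot simply live in \(\Ext^1_\Z\) of the \(\MG_j\); exactness must enter essentially.

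Your construction also does not reduce the difficulty. \(P_0\) is projective over \(\Kring_{p_1}\) and over \(R\), and \(\MG\) is projective over each factor by the base case and the induction hypothesis. So \(P_1\) is a direct summand of \(P_0\) over each factor separately. It is therefore again an exact, \(\Z\)-free module that is projective over each factor, which is precisely the situation of \(\MG\). Likewise, in your fallback, a \(\Kring_{p_1}\)-linear section and an \(R\)-linear section do not combine into one section that is linear for both.

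The missing idea is to split simultaneously only up to an integer, and to let different primes supply coprime integers. The paper takes an arbitrary extension \(\sigma\colon 0\to\MG'\to\MG''\xrightarrow{\beta}\MG\to0\). It uses the induction hypothesis to choose a \(\Kring_{p_1\dotsm p_{k-1}}\)-linear section \(\gamma\), and then, with \(p=p_k\), averages \(\gamma\) using the relations of \(\Kring_p\):
\begin{enumerate}
\item make \(\gamma\) diagonal using the \(\Gu{j}\);
\item average over \(\Gt{0}\) and \(\Gs{1}\), which have order \(p\), and multiply by \(p\) at the object~\(2\); this gives \(\gamma'\) with \(\beta\gamma'=p\);
\item set \(\gamma''_0=p\gamma'_0\), \(\gamma''_1=\Ga{1}{0}\gamma'_0\Ga{0}{1}+(p\Gu{1}-\Ga{1}{0}\Ga{0}{1})\gamma'_1\), and build \(\gamma''_2\) from terms \(\Ga{2}{j}\gamma'_j(\cdots)\Ga{j}{2}\) for \(j=0,1\).
\end{enumerate}
This \(\gamma''\) commutes with all \(\Ga{j}{k}\), remains \(\Kring_{p_1\dotsm p_{k-1}}\)-linear, and satisfies \(\beta\gamma''=p^2\cdot 1_\MG\), so \(p^2[\sigma]=0\). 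Running the same argument with \(p_1\) in place of \(p_k\) gives \(p_1^2[\sigma]=0\), and \(\gcd(p_1^2,p_k^2)=1\) forces \([\sigma]=0\). In effect, a natural section exists up to the factor \(p^2\). The distinctness of the primes, which your argument never uses, is what turns two such partial splittings into a genuine one.
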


\begin{proof}
  Recall that \(\Ext^1_{\Kring_{G}}(\MG,\MG')= 0\) for any left
  \(\Kring_{G}\)-module~\(\MG'\) if and only if~\(\MG\) is a
  projective \(\Kring_{G}\)-module.
  We prove this by induction on the number~\(k\) of prime factors
  of~\(\abs{G}\).
  The base case for a single prime was proven by Manuel K\"ohler,
  see~\cite{Koehler:Thesis}*{Theorem~12.4} or
  Theorem~\ref{the:range_U}.
  Let
  \[
    \sigma\colon 0\to \MG'\to \MG'' \xrightarrow{\beta} \MG\to 0
  \]
  be any extension of left
  \(\Kring_{p_1 \dotsm p_k}\)-modules.
  We must prove that it splits by a module homomorphism.
  Corollary~\ref{cor:isom} implies
  \[
    \Kring_{p_1\dotsm p_k}
    \cong \biggl( \bigotimes_{i=1}^{k-1} \Kring_{p_i} \biggr)
    \otimes_{\Z} \Kring_{p_k}
    \cong \Kring_{p_1\dotsm p_{k-1}}\otimes_{\Z} \Kring_{p_k}.
  \]
  By the induction hypothesis, \(\MG\) is projective over
  \(\Kring_{p_1 \dotsm p_{k-1}}\).
  Therefore, \(\sigma\) splits by a \(\Kring_{p_1 \dotsm
    p_{k-1}}\)\nb-linear map \(\gamma \colon \MG\to \MG''\).
  We are going to define another map \(\gamma\colon \MG\to \MG''\)
  that it is both \(\Kring_{p_k}\)- and \(\Kring_{p_1 \dotsm p_{k-1}}\)-linear.
  Then it is a module map over the whole ring \(\Kring_{p_1\dotsm
    p_{k-1}}\otimes_{\Z} \Kring_{p_k} \cong \Kring_{p_1 \dotsm p_k}\).
  In the following computations, we abbreviate \(p \defeq p_k\) and
  suppress some tensor products to avoid clutter.
  The generators \(\alpha_{i j}\), \(t_j\) and~\(s_j\) are those in
  the ring~\(\Kring_p\).
  We change~\(\gamma\) in three steps.

  The first step makes~\(\gamma\) compatible with the decomposition
  \(\MG= \MG_0\oplus \MG_1 \oplus \MG_2\), where \(\MG_j \defeq 1_j
  \MG\) with the idempotent generators of~\(\Kring_p\).
  The map~\(\gamma\) is represented by a \(3\times 3\)-matrix of maps
  \(1_l \gamma 1_j\colon \MG_j \to \MG_l''\) for \(j,l\in\{0,1,2\}\).
  As~\(\beta\) is \(\Kring_p\)\nb-linear,
  \[
    \beta\left(\sum_{i=1}^3 1_i\gamma 1_i\right)
    = \sum_{i=1}^3 1_i\beta\gamma 1_i
    = \sum_{i=1}^3 1_i \mathrm{id}_{\MG} 1_i
    = \mathrm{id}_\MG.
  \]
  So we may replace~\(\gamma\) by \(\sum_{i=1}^3 1_i\gamma 1_i\),
  which makes it diagonal.
  We assume from now on that~\(\gamma\) is diagonal and write
  \[
    \gamma = \gamma_0+\gamma_1+\gamma_2
  \]
  with maps \(\gamma_i\colon \MG_i \to \MG_i''\) for \(i=0,1,2\).

  In the second step, we define a new map~\(\gamma'\) that commutes
  with \(t_0\) and~\(s_1\).
  We achieve this by employing the relations \eqref{eq:tp}
  and~\eqref{eq:sp} to average over \(t_0\) and~\(s_1\) at the objects
  \(0\) and~\(1\), respectively (see Figure~\ref{fig:K_p}), and
  multiplying by~\(p\) at the object~\(2\).
  More precisely, let
  \[
    \gamma' \defeq
    \sum_{i=0}^{p-1}t^i_0\gamma_0 t^{p-i}_0
    + \sum_{i=0}^{p-1}s^i_1\gamma_1 s^{p-i}_1 + p \gamma_2.
  \]
  This map~\(\gamma'\) commutes with \(t_0\) and~\(s_1\), so that it
  also commutes with \(N(t_0)\) and~\(N(s_1)\).

  In the third step, we replace~\(\gamma'\) by a map~\(\gamma''\) that
  commutes with~\(\alpha_{ij}\) for all distinct \(i,j\in\{0,1,2\}\).
  The idea behind this is to think of~\(\alpha_{ij}\) as a partial
  isometry up to the constant~\(p\) and to identify the corresponding
  \(p\)\nb-projection elements in the ring.
  Note that
  \begin{align*}
    p\cdot 1_0 -N(t_0)
    &= (1_0-t_0)\sum_{i=0}^{p-2}(p-i-1)t_0^i,\\
    p\cdot 1_1 -N(s_1)
    &= (1_1-s_1)\sum_{i=0}^{p-2}(p-i-1)s_1^i.
  \end{align*}
  Then we define
  \begin{align*}
    \gamma''_0
    &\defeq p\cdot \gamma'_0,\\
    \gamma''_1
    &\defeq \alpha_{10}\gamma'_0 \alpha_{01}
      + (p\cdot 1_1-\alpha_{10}\alpha_{01})\gamma'_1,\\
    \gamma''_2
    &\defeq \alpha_{20}\gamma'_0
      \left(\sum_{i=0}^{p-2}(p-i-1)t_0^i\right)\alpha_{02}
      + \alpha_{21}\gamma'_1
      \left(\sum_{i=0}^{p-2}(p-i-1)s_1^i\right)\alpha_{12}.
  \end{align*}
  We claim that~\(\gamma''\) commutes with \(\alpha_{01}\)
  and~\(\alpha_{10}\).
  The relations \eqref{eq:alpha_010_N}, \eqref{eq:alpha_101_N},
  \eqref{eq:ta_01}, and \eqref{eq:sa_10} imply
  \begin{align*}
    \alpha_{01}\alpha_{10}\alpha_{01}
    &= N(t_0)\alpha_{01}= p\alpha_{01},\\
    \alpha_{10}\alpha_{01}\alpha_{10}
    &= N(s_1)\alpha_{10}= p\alpha_{10}.
  \end{align*}
  Therefore,
  \begin{multline*}
    \gamma''\alpha_{10}
    = \gamma''_1 \alpha_{10}
    = \alpha_{10}\gamma'_0\alpha_{01}\alpha_{10}
    + p\gamma'_1\alpha_{10}
    - \alpha_{10}\alpha_{01}\gamma_1'\alpha_{10}
    \\= \alpha_{10}\gamma'_0N(t_0)+p\gamma'_1\alpha_{10}
    - N(s_1)\gamma'_1\alpha_{10}
    = \alpha_{10}\alpha_{01}\alpha_{10}\gamma'_0
    + p\gamma'_1\alpha_{10}
    - \gamma'_1\alpha_{10}\alpha_{01}\alpha_{10}
    \\= p\alpha_{10}\gamma'_0  +p\gamma'_1\alpha_{10}
    - p\gamma'_1\alpha_{10}
    = p\alpha_{10}\gamma'_0
    = \alpha_{10}\gamma''_0
    = \alpha_{10}\gamma''.
  \end{multline*}
  Similar computations show that
  \(\alpha_{01}\gamma'' = \gamma''\alpha_{01}\) and
  that~\(\gamma''\) commutes with~\(s_1\) and~\(t_0\).

  Next, using the relations \eqref{eq:alpha_020},
  \eqref{eq:alpha_121}, \eqref{eq:Na_02}, \eqref{eq:aN_20},
  \eqref{eq:Na_12}, and~\eqref{eq:aN_21} and that
  \(\alpha_{mk}\alpha_{kj}=0\) for \(\{j,k,m\}=\{0,1,2\}\), we
  compute
  \begin{multline*}
    \gamma''\alpha_{21}
    = \gamma''_2\alpha_{21}
    = \alpha_{21}\gamma'_1
    \left(\sum_{i=0}^{p-2}(p-i-1)s_1^i\right)\alpha_{12}\alpha_{21}
    \\= \alpha_{21}\gamma'_1
    \left(\sum_{i=0}^{p-2}(p-i-1)s_1^i\right)(1_1-s_1)
    = \alpha_{21}\gamma'_1(p\cdot 1_1-N(s_1))
    \\= p\alpha_{21}\gamma'_1 - \alpha_{21}N(s_1)\gamma'_1
    = p\alpha_{21}\gamma'_1
    = \alpha_{21}\gamma''_1
    = \alpha_{21}\gamma'',
  \end{multline*}
  \begin{multline*}
    \gamma''\alpha_{12}
    = \gamma''_1\alpha_{12}
    = \alpha_{10}\gamma'_0 \alpha_{01}\alpha_{12}
    + (p\cdot 1_1-\alpha_{10}\alpha_{01})\gamma'_1\alpha_{12}
    \\= p\gamma'_1\alpha_{12}
    - \gamma'_1\alpha_{10}\alpha_{01}\alpha_{12}
    = p\gamma'_1\alpha_{12}
    = \gamma'_1 (p\cdot1_1-N(s_1))\alpha_{12}
    \\= \alpha_{12}\alpha_{21}\gamma'_1
    \left(\sum_{i=0}^{p-2}(p-i-1)s_1^i\right)\alpha_{12}
    = \alpha_{12}\gamma''_2
    = \alpha_{12}\gamma''.
  \end{multline*}
  Similarly, \(\gamma''\alpha_{20} = \alpha_{20}\gamma''\) and
  \(\gamma''\alpha_{02} = \alpha_{02}\gamma''\).  As the
  elements~\(\alpha_{ij}\) generate the ring~\(\Kring_p\), it
  follows that~\(\gamma''\) is \(\Kring_p\)-linear.  Since it
  remains \(\Kring_{p_1\dotsm p_{k-1}}\)-linear, it is
  \(\Kring_{p_1\dotsm p_{k-1} p}\)-linear.

  The map~\(\gamma''\) is not a section for~\(\sigma\).  Instead,
  \[
    \beta\gamma'
    = \sum_{i=0}^{p-1}t^i_0\beta 1_0\gamma t^{p-i}_0
    + \sum_{i=0}^{p-1}s^i_1\beta 1_1\gamma s^{p-i}_1
    + p \beta1_2 \gamma
    = p\cdot(1_0+1_1+1_2)
    =  p\cdot 1_\MG
  \]
  and
  \begin{multline*}
    \beta\gamma''
    = \beta(\gamma_0''+\gamma_1''+\gamma_2'')
    =  p\beta \gamma'_0 +\alpha_{10}\beta \gamma'_0 \alpha_{01}
    + (p\cdot 1_1-\alpha_{10}\alpha_{01})\beta\gamma'_1
    \\ +\alpha_{20}\beta\gamma'_0
    \left(\sum_{i=0}^{p-2}(p-i-1)t_0^i\right)\alpha_{02}
    + \alpha_{21}\beta\gamma'_1
    \left(\sum_{i=0}^{p-2}(p-i-1)s_1^i\right)\alpha_{12}
    \\= p^2\cdot 1_0 + p^2\cdot 1_1
    + p(p\cdot 1_2-N(t_2))+ p(p\cdot 1_2-N(s_2))
    \\= p^2\cdot 1_0 + p^2\cdot 1_1 + p^2\cdot 1_2
    = p^2\cdot 1_\MG.
  \end{multline*}
  Here we used the relations \eqref{eq:alpha_2_N},
  \eqref{eq:alpha_202}, \eqref{eq:alpha_212}, \eqref{eq:ta_20}
  and~\eqref{eq:sa_21}.

  The above calculations say that~\(\gamma''\) is a section for the
  extension~\(p^2\sigma\).
  Thus \([p^2\sigma]=0\) in the group \(\Ext^1_{\Kring_{p_1\dotsm
      p_{k-1} p}}(\MG,\MG')\).
  The same argument may be used for~\(p_1\) instead of~\(p\).
  This shows \([p_1^2\sigma]=0\).
  Since \(p\) and~\(p_{1}\) are coprime, there are \(m,n\in\Z\) with
  \(m p^2 + n p_1^2 =1\).
  So
  \[
    [\sigma] = 1[\sigma] = m[p^2\sigma]+n [p_1^2\sigma] = 0
    \quad
    \text{in }\Ext^1_{\Kring_{p_1\dotsm p_{k-1}p}}(\MG,\MG').\qedhere
  \]
\end{proof}

\begin{proposition}
  \label{pro:range-U_G}
  Let~\(G\) be a finite cyclic group of square-free order.  Let~\(\MG\)
  be a countable \(\Z/2\)\nb-graded left \(\Kring_G\)\nb-module.  The
  following are equivalent:
  \begin{enumerate}[label=\textup{(\arabic*)}]
  \item \(\MG = U_{\mathfrak{C}^{G}}(A)\) for some~\(A\)
    in~\(\mathfrak{B}^{G}\);
  \item \(\MG = U_{\mathfrak{C}^{G}}(A)\) for some~\(A\) in~\(\KK^{G}\);
  \item \(\MG\) is exact;
  \item \(\MG\) has a projective \(\Kring_G\)\nb-module resolution of
    length~\(1\).
  \end{enumerate}
\end{proposition}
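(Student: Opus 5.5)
We prove the cycle of implications (1)\(\Rightarrow\)(2)\(\Rightarrow\)(3)\(\Rightarrow\)(4)\(\Rightarrow\)(1), following Köhler's argument for Theorem~\ref{the:range_U}. The first implication is trivial. For (2)\(\Rightarrow\)(3), write \(G=\Z/p_m\times G'\). The restriction of \(U_{\mathfrak C^G}(A)\) along \(\Kring_{p_m}\to\Kring_G\), taken at a fixed object \(A_1\otimes\dotsb\otimes\widehat{A_m}\otimes\dotsb\otimes A_k\) of the other factors, is given by \(\KK^G_*(D\otimes X,A)\) with \(D\in\mathfrak{C}^{\Z/p_m}\) and \(X\) the fixed tensor product. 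The three objects of \(\mathfrak{C}^{\Z/p_m}\) sit in the exact triangle \(\Sigma\Cont(\Z/p_m)\to C_{p_m}\to\C\to\Cont(\Z/p_m)\). Tensoring with~\(X\) keeps it exact, and applying \(\KK^G_*({-},A)\) gives long exact sequences. We identify their maps with the \(\Ga{j}{k}\) of Theorem~\ref{the:generators_and_relations}, tensored with \(1_X\); this works because Corollary~\ref{cor:isom} identifies \(\Kring_{p_m}\otimes1\) with exactly these external products. This yields the two exact triangles of Definition~\ref{def:exact_K-module}, one clockwise and one counterclockwise, the latter coming from the dual triangle involving \(\Ga{0}{1},\Ga{1}{0}\). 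This is the assertion already recorded before Lemma~\ref{lem:ext1_van}.

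For (3)\(\Rightarrow\)(4), choose a free \(\Kring_G\)-module \(P_0\) on countably many homogeneous generators with a surjection \(P_0\prto\MG\), and let \(P_1\) be its kernel. The module \(P_1\) is countable, and since it is a subgroup of the free abelian group \(P_0\), it is free abelian (\(\Kring_G\) is free abelian by Corollary~\ref{cor:isom} and the torsionfreeness used there). Lemma~\ref{lem:ext1_van} requires exactness, and \(P_1\) need not be exact. So we instead choose \(P_0\) to be a countable direct sum of the exact projectives \(\Kring_G 1_c\) for \(c\in\mathfrak{C}^G\); these equal \(U_{\mathfrak C^G}(c)\) and are exact by (2)\(\Rightarrow\)(3). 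We then aim to show that the kernel of a surjection between exact modules is exact.

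\textbf{Main obstacle.} For a single prime, a kernel of an epimorphism between exact modules need not be exact in general. The key step is that exactness of \(\MG\) forces \(P_0\to\MG\) to be surjective on each ``cycles'' subgroup. To get this, we first make \(P_0\to\MG\) surjective on the kernels \(\ker\Ga{j}{k}^\MG\) and on the images, by adding generators from \(\Kring_G1_c\) mapping onto those pieces, possibly using the exactness of \(\MG\) as a \(\Kring_{p_m}\)-module for each~\(m\). A diagram chase in the long exact sequences (snake lemma) then gives exactness of \(P_1\) as a \(\Kring_{p_m}\)-module for every~\(m\). If this chase proves too delicate, the fallback is to induct on~\(k\): use Corollary~\ref{cor:isom}, write \(\Kring_G\cong\Kring_{G'}\otimes\Kring_{p}\), and build the resolution by tensoring Köhler's resolutions, after checking that tensor products of exact modules are exact. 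With \(P_1\) exact, countably generated and free abelian, Lemma~\ref{lem:ext1_van} makes it projective, so \(0\to P_1\to P_0\to\MG\to0\) is a length-one projective resolution.

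For (4)\(\Rightarrow\)(1), realise the resolution geometrically. Each countable projective \(\Kring_G\)-module is a direct summand of a countable sum of shifts of \(\Kring_G1_c=U_{\mathfrak C^G}(c)\). By Eilenberg's swindle, adding a free summand if needed, we may take \(P_0,P_1\) free, so \(P_i=U_{\mathfrak C^G}(\hat P_i)\) with \(\hat P_i\) a countable direct sum of suspensions of objects of \(\mathfrak{C}^G\). By Yoneda, the map \(P_1\to P_0\) lifts to a morphism \(f\colon\hat P_1\to\hat P_0\) in~\(\KK^G\). Let \(A\) be its cone, which lies in \(\mathfrak B^G=\langle\mathfrak C^G\rangle\). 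The long exact sequence, together with the injectivity of \(U_{\mathfrak C^G}(f)\), gives \(U_{\mathfrak C^G}(A)\cong\operatorname{coker}=\MG\).
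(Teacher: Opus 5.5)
Your outline matches the paper's. The implication (1)\(\Rightarrow\)(2) is trivial. Your (2)\(\Rightarrow\)(3) is the ``easy consequence of the definition of~\(\Ga{j}{k}\)''. Your (4)\(\Rightarrow\)(1) spells out correctly the general realisation argument the paper cites from Bentmann--Meyer: Eilenberg swindle, lifting the map via Yoneda, then taking the cone.

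The gap is in (3)\(\Rightarrow\)(4). Your ``main obstacle'' rests on a false claim: the kernel of an epimorphism between exact modules is \emph{always} exact. Since \(\Ga{j}{k}\Ga{k}{m}=0\) whenever \(\{j,k,m\}=\{0,1,2\}\), the clockwise and counterclockwise sequences in Definition~\ref{def:exact_K-module} are chain complexes for every \(\Kring_{p_m}\)-module. Taking the \(\Z/2\)-grading into account, they are \(6\)-periodic, and exactness means precisely that both are acyclic.

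The functors \(\MG\mapsto 1_j\MG\) are exact. So a short exact sequence \(0\to P_1\to P_0\to\MG\to 0\) of \(\Kring_G\)-modules restricts, for each~\(m\), to a short exact sequence of these complexes. The long exact homology sequence of the unrolled periodic complexes then shows that exactness has the two-out-of-three property.

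Hence, once you take \(P_0\) to be a countable direct sum of shifts of \(\Kring_G 1_c=U_{\mathfrak C^G}(c)\), which are exact, the kernel \(P_1\) is exact automatically. It is also countable and free abelian, being a subgroup of the free abelian group \(P_0\). So Lemma~\ref{lem:ext1_van} makes it projective. This is the step the paper leaves implicit when it says (3)\(\Rightarrow\)(4) ``is Lemma~\ref{lem:ext1_van}''.

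As written, your argument does not supply this. You replace it with an unperformed diagram chase after enlarging \(P_0\) to be surjective on kernels and images, which is unnecessary. Your fallback is not viable either. A general exact \(\Kring_G\)-module is not a tensor product of \(\Kring_{p_i}\)-modules, so there is nothing to which ``tensoring K\"ohler's resolutions'' could apply.
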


\begin{proof}
  The implication (1)\(\Rightarrow\)(2) is trivial,
  (2)\(\Rightarrow\)(3) is an easy consequence of the definition
  of~\(\alpha_{i j}\), and (3)\(\Rightarrow\)(4) is
  Lemma~\ref{lem:ext1_van}.
  The implication (4)\(\Rightarrow\)(1) is a general statement in the
  setting of homological algebra in triangulated categories; see, for
  example, \cite{Bentmann-Meyer:More_general}*{Proposition~2.3}.
\end{proof}

Finally, Theorem~\ref{the:Eqv_UCT} follows from
Theorem~\ref{thm:truct} because Proposition~\ref{pro:range-U_G} gives
the required length-\(1\) projective resolutions.

\section{Uniquely divisible modules}
\label{sec:uniquely_p-divisible}

We now prove stronger results when a \(\Kring_G\)\nb-module is
uniquely divisible for some of the primes dividing~\(\abs{G}\).

Recall that for a prime~\(p\), a module~\(\MG\) is called
\emph{uniquely~\(p\)\nb-divisible} if multiplication by~\(p\) on~\(\MG\)
is invertible.
For a \(\Kring_G\)\nb-module~\(\MG\), a prime~\(p\) dividing the order
of~\(G\), and \(i \in \{0, 1, 2\}\), let~\(\MG_i^{(p)}\) denote the
direct summand \(1^{(p)}_i\cdot \MG\) defined by the projection
\(1^{(p)}_i\in\Kring_p\).
This is still a module over \(\Kring_{\abs{G}/p}\), the tensor product
of~\(\Kring_{p_j}\) for the other prime divisors of~\(\abs{G}\), and
\(\MG = \MG^{(p)}_0 + \MG^{(p)}_1 + \MG^{(p)}_2\).
By~\(\vartheta_p\) we denote a primitive \(p\)th root of unity.

\begin{proposition}
  \label{pro:div_modules}
  Let~\(G\) be a finite cyclic group of square-free order \(\abs{G} =
  p_1 \cdot p_2 \dotsm p_n\) with distinct primes \(p_1,\dotsc,p_n\).
  Let~\(\DmG\) be an exact \(\Kring_G\)\nb-module.
  Let \(1\le k \le n\).
  Suppose that for each prime \(p\in\{p_1,\dotsc, p_k\}\),
  one of the components \(\DmG_0^{(p)}\), \(\DmG_1^{(p)}\) or~\(\DmG_2^{(p)}\)
  is uniquely \(p\)\nb-divisible.
  Then, for each such prime, the remaining two components are also
  uniquely \(p\)\nb-divisible.

  For \(I\in \{X,Y,Z\}^k\), let \(R_I = \Z[1/p_1p_2 \dotsm p_k]
  [\setgiven{\vartheta_{p_j}}{I_j\in\{Y,Z\}}]\).
  There are \(3^k\) \(\Z/2\)\nb-graded exact \(\Kring_{p_{k+1}\dotsm
    p_n} \otimes_\Z R_I\)\nb-modules \((A_I)_{I\in \{X,Y,Z\}^k}\) that
  together determine the module~\(\DmG\) uniquely, in a way explained
  during the proof.
  If \(k=n\), then~\(A_I\) is simply an \(R_I\)\nb-module.
\end{proposition}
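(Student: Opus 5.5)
Fix one prime \(p\in\{p_1,\dotsc,p_k\}\) and view \(\DmG\) as a \(\Kring_p\)-module with coefficients in the commuting ring \(\Kring_{\abs{G}/p}\). Everything we do for \(p\) uses only \(\Kring_p\)-relations, so it is automatically \(\Kring_{\abs{G}/p}\)-linear. We then iterate over \(p_1,\dotsc,p_k\).

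\textbf{Step 1: uniqueness of divisibility spreads.} By exactness, the two six-term cyclic sequences around the triangle of Definition~\ref{def:exact_K-module} are exact. Suppose, say, \(\DmG_2^{(p)}\) is uniquely \(p\)-divisible. Use \eqref{eq:alpha_2_N}, \eqref{eq:alpha_010_N}, \eqref{eq:alpha_101_N} together with \(\Gt{j}^p=1\), \(\Gs{j}^p=1\). In the group ring \(\Z[t]/(t^p-1)\), the elements \(N(t)\) and \(1-t\) satisfy
\[
  N(t)(1-t)=0,\qquad N(t)^2=pN(t),
\]
and \(p\) lies in the ideal generated by \(N(t)\) and \(1-t\). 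Hence, after inverting \(p\), the ring splits via the idempotents \(N(t)/p\) and \(1-N(t)/p\). I will show that \(p\) acts bijectively on \(\DmG_0^{(p)}\) and \(\DmG_1^{(p)}\) by a diagram chase in the exact triangle, tensored with \(\Z/p\) and with \(p\)-torsion. Concretely, the \(p\)-torsion subgroups and the quotients by \(p\) must vanish. The chase uses two facts. First, \(\Ga{0}{1}\Ga{1}{0}=N(\Gt{0})\) and \(\Ga{1}{0}\Ga{0}{1}=N(\Gs{1})\). Second, \(1-\Gt{0}=\Ga{0}{2}\Ga{2}{0}\) factors through \(\DmG_2\). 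Take \(x\in \DmG_0\) with \(px=0\). Then \((1-\Gt{0})x\) factors through the divisible, torsion-free \(\DmG_2\), so it is \(0\) and \(\Gt{0}x=x\). This gives \(N(\Gt{0})x=px=0\). Next apply exactness at \(\DmG_0\) to lift \(x\) into the image of \(\Ga{0}{2}\) or of \(\Ga{0}{1}\). Uniqueness of divisibility then forces \(x=0\). Cokernels are handled dually. The other two starting cases (component \(0\) or component \(1\) divisible) are analogous, using the symmetry of the relations. I expect this chase to be the main obstacle, since it requires careful case bookkeeping.

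\textbf{Step 2: decomposition over \(\Z[1/p]\).} Once \(\DmG\) is uniquely \(p\)-divisible, it is a module over \(\Kring_p[1/p]\). Compute this ring explicitly from Theorem~\ref{the:generators_and_relations}. The elements \(N(\Gt{0})/p\) and \(N(\Gs{1})/p\) are idempotents. Also \(\Ga{0}{1}\) and \(\Ga{1}{0}/p\) become mutually inverse isomorphisms between the "\(N\)-parts" at vertices \(0\) and \(1\). Similarly, \(\Ga{2}{0},\Ga{0}{2}\) and \(\Ga{2}{1},\Ga{1}{2}\) identify the complementary parts at vertices \(0\) and \(1\) with pieces of vertex \(2\). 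Here \(\Gt{2}\) and \(\Gs{2}\) act through \(\Z[1/p][t]/(N(t))\cong\Z[1/p][\vartheta_p]\). The expected outcome is that \(\Kring_p[1/p]\) is Morita equivalent to
\[
  \Z[1/p]\times\Z[1/p,\vartheta_p]\times\Z[1/p,\vartheta_p],
\]
with the three factors labelled \(X,Y,Z\). The \(X\) part corresponds to the \(N\)-parts at \(0\) and \(1\). The \(Y\) part corresponds to \((1-N/p)\DmG_0\cong \Gt{2}\)-eigenpart of \(\DmG_2\). The \(Z\) part corresponds to \((1-N/p)\DmG_1\cong\Gs{2}\)-eigenpart. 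One of these may be odd, since \(\Ga{1}{2},\Ga{2}{1}\) are odd. I would verify the Morita equivalence by exhibiting a full set of orthogonal idempotents and checking the corner rings. Exactness makes no further demand here: after inverting \(p\), every module over this ring is automatically exact. This can be checked directly by computing the triangle as split short exact sequences.

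\textbf{Step 3: induction on \(k\).} Because the Morita equivalence for \(p\) is built from \(\Kring_p\) alone, it commutes with the action of \(\Kring_{\abs{G}/p}\). Hence \(\DmG\) corresponds to a triple \((A_X,A_Y,A_Z)\) of modules over \(\Kring_{\abs{G}/p}\otimes R\), where \(R\) is \(\Z[1/p]\) or \(\Z[1/p,\vartheta_p]\). Exactness for the other primes is inherited, since the pieces are direct summands defined by idempotents or images of \(\Kring_p\)-maps commuting with the other factors. Each piece is still uniquely \(p'\)-divisible for the remaining primes \(p'\le p_k\) by hypothesis (after Step 1), because direct summands of uniquely divisible modules are uniquely divisible. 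Repeating Steps 1–2 for \(p_2,\dotsc,p_k\) produces the \(3^k\) modules \(A_I\) over \(\Kring_{p_{k+1}\dotsm p_n}\otimes R_I\). When \(k=n\), no \(\Kring\)-factor remains, so each \(A_I\) is just an \(R_I\)-module.
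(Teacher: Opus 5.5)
Your architecture differs from the paper's, and Steps~2 and~3 are a valid alternative. The paper proves neither the propagation of unique divisibility nor the splitting itself. It cites \cite{Meyer:Actions_Kirchberg}*{Theorem~7.2} for both, including the explicit normal form \(\DmG^{(p)}(A_X,A_Y,A_Z)\). It then writes down central idempotents \(E_X,E_Y,E_Z\in\Kring_p[1/p]\), which commute with the other factors \(\Kring_{p_i}\), and iterates over the primes.

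Your Step~2 is the ring-level version of the same splitting. The six idempotents \(N(\Gt{0})/p\), \(\Gu{0}-N(\Gt{0})/p\), \(N(\Gs{1})/p\), \(\Gu{1}-N(\Gs{1})/p\), \(N(\Gs{2})/p\), \(N(\Gt{2})/p\) pair up via \(\Ga{0}{1}\), \(\Ga{2}{0}\), \(\Ga{2}{1}\) into exactly the paper's \(E_X,E_Y,E_Z\). Orthogonality at vertex~\(2\) needs \(N(\Gt{2})N(\Gs{2})=0\), which follows from \eqref{eq:alpha_2_N} and \(\Gt{2}^p=\Gu{2}\). What your route buys is a proof that exactness is automatic once \(p\) is inverted, and an equivalence of categories rather than only ``determines''.

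The gap is in Step~1. The relations in Theorem~\ref{the:generators_and_relations} are symmetric under exchanging vertices \(0\) and~\(1\), but nothing moves vertex~\(2\). So your claim that the other starting cases are ``analogous by symmetry'' is false for the case where the given divisible piece is \(\DmG_0^{(p)}\) or \(\DmG_1^{(p)}\). Write \(\DmG_j=\DmG_j^{(p)}\) and \(M[p]\) for \(p\)-torsion.

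When \(\DmG_2\) is divisible, your ingredients suffice, but the chase must go twice around the triangle:
\begin{itemize}
\item For \(x\in\DmG_0[p]\) you get \(\Ga{2}{0}x=0\), so \(x=\Ga{0}{1}y\).
\item Using divisibility of \(\DmG_2\), you may take \(y\in\DmG_1[p]\).
\item Then \(y=\Ga{1}{0}w\), and a second adjustment gives \(w\in\DmG_0[p]\).
\item Finally \(x=N(\Gt{0})w=pw=0\), since \(\Gt{0}w=w\).
\end{itemize}
Your observation \(N(\Gt{0})x=px\) for the original~\(x\) is vacuous on its own.

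When instead \(\DmG_0\) is the divisible piece, the same chase for \(y\in\DmG_1[p]\) only yields \(y=\Ga{1}{2}\Ga{2}{1}y'=(\Gu{1}-\Gs{1})y'\) with \(y'\in\DmG_1[p]\). For cokernels it only yields \(\DmG_1=p\DmG_1+(\Gu{1}-\Gs{1})\DmG_1\). No identity like \(N(\Gt{0})w=pw\) closes this.

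The missing idea is that \((1-s)^p\equiv 1-s^p=0\) modulo \(p\,\Z[s]\). Hence \(\Gu{1}-\Gs{1}\) is nilpotent on \(\DmG_1[p]\) and on \(\DmG_1/p\DmG_1\), and iterating gives \(\DmG_1[p]=0=\DmG_1/p\DmG_1\). The analogous statement holds for \(\DmG_2\) with \(\Gs{2}\). The remaining pieces then follow from the four/five lemma applied to multiplication by~\(p\) on the six-term exact sequences. With this added, or by citing Meyer's Theorem~7.2 as the paper does, your argument is complete.
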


\begin{proof}
  We fix \(p\in\{p_1,\dotsc,p_k\}\) and focus on the
  \(\Kring_p\)\nb-module structure on~\(\DmG\) for a moment.
  By~\cite{Meyer:Actions_Kirchberg}*{Theorem~7.2}, the pieces
  \(\DmG_0^{(p)}\), \(\DmG_1^{(p)}\) and~\(\DmG_2^{(p)}\) are uniquely
  \(p\)\nb-divisible.
  There are a \(\Z/2\)\nb-graded \(\Z[1/p]\)\nb-module \(A_X\) and
  \(\Z/2\)\nb-graded \(\Z[1/p, \vartheta_{p}]\)\nb-modules
  \(A_Y,A_Z\) such that~\(\DmG\) has the following form as a
  \(\Kring_{p}\)\nb-module:
  \begin{alignat*}{3}
    \DmG_0^{(p)}
    &= A_X\oplus A_Y,
    &\ 
      \DmG_1^{(p)}
    &= A_X\oplus A_Z,
    &\ 
      \DmG_2^{(p)}
    &= A_Y\oplus \Sigma A_Z,\\
    \Ga{0}{1}^{\DmG^{(p)}}
    &= \begin{pmatrix} 1^{A_X}&0\\0&0 \end{pmatrix},
    &\ 
      \Ga{1}{2}^{\DmG^{(p)}}
    &= \begin{pmatrix} 0&0\\0&(1-\vartheta_{p})^{A_Z} \end{pmatrix},
    &\ 
      \Ga{2}{0}^{\DmG^{(p)}}
    &= \begin{pmatrix} 0&1^{A_Y}\\0&0 \end{pmatrix},\\
    \Ga{1}{0}^{\DmG^{(p)}}
    &= \begin{pmatrix} {p}^{A_X}&0\\0&0 \end{pmatrix},
    &\ 
      \Ga{2}{1}^{\DmG^{(p)}}
    &= \begin{pmatrix} 0&0\\0&1^{A_Z} \end{pmatrix},
    &\ 
      \Ga{0}{2}^{\DmG^{(p)}}
    &= \begin{pmatrix} 0&0\\(1-\vartheta_{p})^{A_Y}&0 \end{pmatrix}.
  \end{alignat*}
  Here~\(\Sigma A_Z\) means~\(A_Z\) with opposite parity.
  Also,
  \begin{alignat*}{2}
    N(\Gt{0}^{\DmG^{(p)}})
    &= \begin{pmatrix} p&0\\0&0 \end{pmatrix},
    &\qquad N(\Gs{1}^{\DmG^{(p)}})
    &= \begin{pmatrix} p&0\\0&0 \end{pmatrix},\\
    N(\Gt{2}^{\DmG^{(p)}})
    &= \begin{pmatrix} 0&0\\0&p \end{pmatrix},
    &\qquad N(\Gs{2}^{\DmG^{(p)}})
    &= \begin{pmatrix} p&0\\0&0 \end{pmatrix}.
  \end{alignat*}
  We denote this module by \(\DmG^{(p)}(A_X,A_Y,A_Z)\).
  The following elements of \(\Kring_p[1/p]\) are central
  idempotents, projecting onto \(\DmG^{(p)}(A_X,0,0)\),
  \(\DmG^{(p)}(0,A_Y,0)\) and \(\DmG^{(p)}(0,0,A_Z)\),
  respectively:
  \begin{align*}
    E_X^{\DmG^{(p)}}
    &\defeq \frac{1}{p}N(t_0^{\DmG^{(p)}})
      + \frac{1}{p}N(s_1^{\DmG^{(p)}}),\\
    E_Y^{\DmG^{(p)}}
    &\defeq
      \left(1_0^{\DmG^{(p)}} - \frac{1}{p}N(t_0^{\DmG^{(p)}})\right)
      + \frac{1}{p}N(s_2^{\DmG^{(p)}}),\\
    E_Z^{\DmG^{(p)}}
    &\defeq \left(1_1^{\DmG^{(p)}} -
      \frac{1}{p}N(s_1^{\DmG^{(p)}})\right)
      + \frac{1}{p}N(t_2^{\DmG^{(p)}}).
  \end{align*}
  They are well defined because of the unique \(p\)\nb-divisibility
  assumption.

  Now we consider again the entire \(\Kring_G\)\nb-module structure
  on~\(\DmG\).
  By the tensor product decomposition of Corollary~\ref{cor:isom},
  the actions of \(\Kring_{p_i}\) and~\(\Kring_{p_j}\) on~\(\DmG\)
  commute for all \(i,j\in\{1,\dotsc,n\}\).
  Each \(E_I^{\DmG^{(p_j)}}\) belongs to the centre of
  \(\Kring_G[1/p_j]\) and so these are commuting idempotent
  \(\Kring_G\)\nb-module endomorphisms of~\(\DmG\).
  Thus,
  \begin{equation}
    \label{eq:comp_idem}
    E_X^{\DmG^{(p_j)}} + E_Y^{\DmG^{(p_j)}} + E_Z^{\DmG^{(p_j)}}
    = 1_0^{\DmG^{(p_j)}} +1_1^{\DmG^{(p_j)}}+1_2^{\DmG^{(p_j)}}
    = 1_{\DmG^{(p_j)}(A_X,A_Y,A_Z)}
  \end{equation}
  by~\eqref{eq:alpha_2_N}, and
  \[
    \DmG^{(p_j)}(A_X,A_Y,A_Z)
    = \DmG^{(p_j)}(A_X,0,0) \oplus \DmG^{(p_j)}(0,A_Y,0)
    \oplus \DmG^{(p_j)}(0,0,A_Z)
  \]
  is a \(\Kring_G\)\nb-module isomorphic to~\(\DmG\), where the action of
  \(\Kring_{p_j}\) is determined by~\(\alpha_{i k}^{\DmG^{(p_j)}}\).

  Since \(A_X = \im(\alpha_{01}^{\DmG^{(p_j)}})\), \(A_Y =
  \im(\alpha_{02}^{\DmG^{(p_j)}})\), and
  \(A_Z=\im(\alpha_{12}^{\DmG^{(p_j)}})\), these components are
  \(\Kring_{p_i}\)\nb-modules for all \(p_i\neq p_j\) by
  Corollary~\ref{cor:isom}.
  Also, since  \(A_X\), \(A_Y\) and~\(A_Z\) are direct
  \(\Kring_{p_i}\)\nb-module summands of \(\DmG^{(p_j)}(A_X,0,0)\),
  \(\DmG^{(p_j)}(0,A_Y,0)\) and \(\DmG^{(p_j)}(0,0,A_Z)\), respectively,
  they are themselves exact \(\Kring_{p_i}\)\nb-modules.
  Of course, \(A_X\), \(A_Y\), \(A_Z\) completely determine
  \(\DmG^{(p_j)}(A_X,0,0)\), \(\DmG^{(p_j)}(0,A_Y,0)\) and
  \(\DmG^{(p_j)}(0,0,A_Z)\).

  When we iterate the decomposition process above for all the
  divisible primes \(p_1, p_2,\dots ,p_k\), we get~\(3^k\) modules
  \[
    \DmG_I \defeq
    \left(\bigotimes_{j=1}^k E_{I_j}^{\DmG^{(p_j)}}\right) \DmG
  \]
  for \(I\in\{X,Y,Z\}^k\).
  Here~\(\DmG_I\) is determined by a  \(\Z/2\)\nb-graded module~\(A_I\) over
  the ring \(\Kring_{p_{k+1}\dots p_n} \otimes_\Z R_I\).
  Specifically, 
  \[
    A_I = \left(\prod_{j=1}^k 1_{c(I_j)}^{(p_j)}\right)\DmG_I
    = \bigcap_{j=1}^k(\DmG_I)_{c(I_j)}^{(p_j)},
  \]
  for \(c(X)=0,\) \(c(Y)=0\), and \(c(Z)=1\).

  By~\eqref{eq:comp_idem}, there is an isomorphism of
  \(\Kring_G\)\nb-modules
  \[
    \bigoplus_{I\in\{X,Y,Z\}^k} \DmG_I
    = \bigoplus_{I\in\{X,Y,Z\}^k} \left(\bigotimes_{j=1}^k
      E_{I_j}^{\DmG^{(p_j)}}\right) \DmG\cong \DmG.  \qedhere
  \]
\end{proof}

We now consider the special case when \(G\cong \Z/p \times \Z/q\) for
two different primes \(p,q\).
Let~\(\DmG\) be a uniquely \(q\)\nb-divisible \(\Kring_G\)\nb-module.
By Proposition~\ref{pro:div_modules}, \(\DmG\) may be specified by a
triple of exact \(\Z/2\)\nb-graded \(\Kring_p\)\nb-modules \((A_X,A_Y,A_Z)\),
where~\(A_X\) is also a \(\Z[1/q]\)\nb-module and \(A_Y, A_Z\) are
modules over \(\Z[1/q, \vartheta_q]\).
Then, as shown in the proof of Proposition~\ref{pro:div_modules},
\[
  \DmG \cong \bigl(E_X^{\DmG^{(q)}}+ E_Y^{\DmG^{(q)}}+ E_Z^{\DmG^{(q)}}\bigr)\DmG
  = \DmG^{(q)}(A_X,A_Y,A_Z)
\]
as \(\Kring_G\)\nb-modules.

One way to get candidates for \(A_X\), \(A_Y\) and~\(A_Z\) is to
take any exact \(\Z/2\)\nb-graded \(\Kring_p\)\nb-modules \(A_X^0\),
\(A_Y^0\) and~\(A_Z^0\) and form
\[
  A_X \defeq A_X^0 \otimes_\Z \Z[1/q],\qquad
  A_Y \defeq A_Y^0 \otimes_\Z \Z[1/q,\vartheta_q],\qquad
  A_Z \defeq A_Z^0 \otimes_\Z \Z[1/q,\vartheta_q].
\]
Since \(\Z[1/q]\) and \(\Z[1/q,\vartheta_q]\) are flat, these are also
exact \(\Z/2\)\nb-graded \(\Kring_p\)\nb-modules, and they now gain
the extra structure that allows us to turn them into uniquely
\(q\)\nb-divisible \(\Kring_G\)\nb-modules.
In particular, we may let~\(A_I^0\) be one of the examples in
\cite{Meyer:Actions_Kirchberg}*{Section~9}.
These examples are important because, up to an extension by a uniquely
\(p\)\nb-divisible exact module, they give all examples of exact
\(\Z/2\)\nb-graded \(\Kring_p\)\nb-modules~\(\Mp\) where the
ingredient~\(\Mp_1\) is a cyclic group.
Except for Examples 9.2 and~9.4 in~\cite{Meyer:Actions_Kirchberg}, we
have \(\Mp_1 = \Z/p^k\) for some integer \(k\ge1\), so that \(\Mp_1 \otimes_\Z
\Z[1/q] \cong \Mp_1\).
This means that if~\(A_X^0\) is one of these examples, then \(A_X^0
\otimes_\Z \Z[1/q]\) also corresponds to a group action on a Cuntz
algebra.

In contrast, tensoring with \(\Z[\vartheta_q,1/q]\) always makes the
groups bigger, so that \(\Mp_1\otimes_\Z \Z[\vartheta_q,1/q]\) cannot be
cyclic any more.
Sometimes, we may instead find the required action of
\(\Z[1/q,\vartheta_q]\) on \(A_Y^0\) or~\(A_Z^0\) itself:

\begin{example}
  \label{exa:previous_examples_checked}
  In Examples 9.7, 9.9, 9.11, and 9.12
  in~\cite{Meyer:Actions_Kirchberg}, the pieces \(\Mp_0\), \(\Mp_1\)
  and~\(\Mp_2\) in the \(\Kring_p\)\nb-module~\(\Mp\) are all purely
  \(p\)\nb-torsion.
  This makes~\(\Mp\) a module over the ring~\(\Z_p\)
  of \(p\)\nb-adic integers in a natural way.
  By naturality, the \(\Z_p\)\nb-module structure commutes with the
  action of the ring~\(\Kring_p\), making~\(\Mp\) a module over \(\Kring_p
  \otimes_\Z \Z_p\).
  If there is a homomorphism \(\Z[1/q,\vartheta_q] \to \Z_p\), then we
  may use this to define an action of \(\Z[1/q,\vartheta_q] \otimes_\Z
  \Kring_p\) on~\(\Mp\), which then allows us to take~\(\Mp\) for the
  pieces \(A_Y\) or~\(A_Z\) in our decomposition.
  Such a homomorphism exists if and only if \(p\neq q\) (which we
  assume anyway) and \(q \mid p-1\).
  The latter ensures that there is a primitive \(q\)th root of unity
  in the multiplicative group of the field~\(\Z/p\) and hence also
  in~\(\Z_p\) by Hensel's Lemma.
\end{example}

When~\(\DmG\) is both uniquely \(p\)- and \(q\)\nb-divisible, then it is
\(\abs{G}\)\nb-divisible.
This suffices to apply the UCT proven in
\cite{Meyer-Nadareishvili:UCT_actions}*{Theorem~1.1}.
We also get this information in a more concrete way from
Proposition~\ref{pro:div_modules}.
Namely, we may specify~\(\DmG\) by \(9\) \(\Z/2\)\nb-graded abelian
groups
\[
  A_{(X,X)}, A_{(X,Y)}, A_{(X,Z)}, A_{(Y,X)}, A_{(Y,Y)}, A_{(Y,Z)}, A_{(Z,X)}, A_{(Z,Y)}, A_{(Z,Z)},
\]
where \(A_{(X,X)}\) is a module over \(\Z[1/pq]\), \(A_{(X,Y)}\) and
\(A_{(X,Z)}\) are modules over \(\Z[1/pq, \vartheta_q]\),
\(A_{(Y,X)}\) and \(A_{(Z,X)}\) are modules over \(\Z[1/pq,
\vartheta_p]\), whereas \(A_{(Y,Y)}\), \(A_{(Y,Z)}\), \(A_{(Z,Y)}\),
and \(A_{(Z,Z)}\) are modules over \(\Z[1/pq, \vartheta_p,
\vartheta_q]\).

Separating the exact \(\Kring_G\)\nb-module into its \(9\) independent
direct summands over~\(\Kring_q\), we get:
\begin{alignat*}{3}
  \DmG &\cong
  & \Bigl( & \phantom{{}+{}} E_{X}^{\DmG^{(p)}} \otimes E_{X}^{\DmG^{(q)}}
        + E_{X}^{\DmG^{(p)}} \otimes E_{Y}^{\DmG^{(q)}}
        + E_{X}^{\DmG^{(p)}} \otimes E_{Z}^{\DmG^{(q)}}  \\
    & &&+ E_{Y}^{\DmG^{(p)}} \otimes E_{X}^{\DmG^{(q)}}
        + E_{Y}^{\DmG^{(p)}} \otimes E_{Y}^{\DmG^{(q)}}
        + E_{Y}^{\DmG^{(p)}} \otimes E_{Z}^{\DmG^{(q)}} \\
    & &&+ E_{Z}^{\DmG^{(p)}} \otimes E_{X}^{\DmG^{(q)}}
        + E_{Z}^{\DmG^{(p)}} \otimes E_{Y}^{\DmG^{(q)}}
        + E_{Z}^{\DmG^{(p)}} \otimes E_{Z}^{\DmG^{(q)}}\Bigr) \DmG  \\
      &\cong &&\phantom{{}\oplus{}} \DmG^{(q)}(A_{(X,X)},0,0) \oplus
                   \DmG^{(q)}(0,A_{(X,Y)},0)
                   \oplus \DmG^{(q)}(0,0,A_{(X,Z)}) \\
    & &&\oplus \DmG^{(q)}(A_{(Y,X)},0,0) \oplus \DmG^{(q)}(0,A_{(Y,Y)},0)
           \oplus \DmG^{(q)}(0,0,A_{(Y,Z)}) \\
    & &&\oplus \DmG^{(q)}(A_{(Z,X)},0,0) \oplus \DmG^{(q)}(0,A_{(Z,Y)},0)
           \oplus \DmG^{(q)}(0,0,A_{(Z,Z)}).
\end{alignat*}


\bib{Arano-Kubota:Atiyah-Segal}{article}{
  author={Arano, Yuki},
  author={Kubota, Yosuke},
  title={A categorical perspective on the Atiyah--Segal completion theorem in KK-theory},
  journal={J. Noncommut. Geom.},
  volume={12},
  date={2018},
  number={2},
  pages={779--821},
  issn={1661-6952},
  review={\MR {3825201}},
  doi={10.4171/JNCG/291},
}

\bib{Bentmann-Meyer:More_general}{article}{
  author={Bentmann, Rasmus},
  author={Meyer, Ralf},
  title={A more general method to classify up to equivariant KK\nobreakdash -equivalence},
  journal={Doc. Math.},
  volume={22},
  date={2017},
  pages={423--454},
  issn={1431-0635},
  doi={10.4171/DM/570},
  review={\MR {3628788}},
}

\bib{Bouc-DellAmbrogio-Martos:Splitting}{article}{
  author={Bouc, Serge},
  author={Dell'Ambrogio, Ivo},
  author={Martos, Rub\'en},
  title={A general Greenlees--May splitting principle},
  journal={M\"unster J. Math.},
  volume={18},
  date={2025},
  number={1},
  pages={227--243},
  issn={1867-5778},
  doi={10.17879/11958511582},
  review={\MR {4976179}},
}

\bib{dellAmbrogio:Cell_G}{article}{
  author={Dell'Ambrogio, Ivo},
  title={Equivariant Kasparov theory of finite groups via Mackey functors},
  journal={J. Noncommut. Geom.},
  volume={8},
  date={2014},
  number={3},
  pages={837--871},
  issn={1661-6952},
  review={\MR {3261603}},
  doi={10.4171/JNCG/172},
}

\bib{Gabe-Szabo:Dynamical_Kirchberg}{article}{
  author={Gabe, James},
  author={Szab\'o, G\'abor},
  title={The dynamical Kirchberg--Phillips theorem},
  journal={Acta Math.},
  volume={232},
  date={2024},
  number={1},
  pages={1--77},
  issn={0001-5962},
  review={\MR {4747811}},
  doi={10.4310/ACTA.2024.v232.n1.a1},
}

\bib{Koehler:Thesis}{thesis}{
  author={K\"ohler, Manuel},
  title={Universal coefficient theorems in equivariant KK-theory},
  institution={Georg-August-Universit\"at G\"ottingen},
  type={phdthesis},
  date={2010},
  doi={10.53846/goediss-2561},
}

\bib{Meyer:Actions_Kirchberg}{article}{
  author={Meyer, Ralf},
  title={On the classification of group actions on \(\mathrm {C}^*\)-algebras up to equivariant KK-equivalence},
  journal={Ann. K-Theory},
  issn={2379-1683},
  date={2021},
  volume={6},
  number={2},
  pages={157--238},
  doi={10.2140/akt.2021.6.157},
  review={\MR {4301904}},
}

\bib{Meyer-Nadareishvili:UCT_actions}{article}{
  author={Meyer, Ralf},
  author={Nadareishvili, George},
  title={A universal coefficient theorem for actions of finite groups on C*-algebras},
  journal={J. Math. Sci. Univ. Tokyo},
  volume={33},
  date={2026},
  number={1},
  pages={21--47},
  issn={1340-5705},
  review={\ZBmath {08153048}},
  year={2026},
  eprint={https://www.ms.u-tokyo.ac.jp/journal/number/jms3301.html},
}

\bib{Meyer-Nest:BC}{article}{
  author={Meyer, Ralf},
  author={Nest, Ryszard},
  title={The Baum--Connes conjecture via localisation of categories},
  journal={Topology},
  volume={45},
  date={2006},
  number={2},
  pages={209--259},
  issn={0040-9383},
  review={\MR {2193334}},
  doi={10.1016/j.top.2005.07.001},
}

\bib{Meyer-Nest:Homology_in_KK}{article}{
  author={Meyer, Ralf},
  author={Nest, Ryszard},
  title={Homological algebra in bivariant $K$\nobreakdash -theory and other triangulated categories. I},
  conference={ title={Triangulated categories}, },
  book={ series={London Math. Soc. Lecture Note Ser.}, editor={Holm, Thorsten}, editor={J\o rgensen, Peter}, editor={Rouqier, Rapha\"el}, volume={375}, publisher={Cambridge Univ. Press}, place={Cambridge}, },
  date={2010},
  pages={236--289},
  review={\MR {2681710}},
  doi={10.1017/CBO9781139107075.006},
}

\bib{Meyer-Nest:Filtrated_K}{article}{
  author={Meyer, Ralf},
  author={Nest, Ryszard},
  title={\(C^*\)\nobreakdash -Algebras over topological spaces: filtrated \(\textup K\)\nobreakdash -theory},
  journal={Canad. J. Math.},
  volume={64},
  pages={368--408},
  date={2012},
  review={\MR {2953205}},
  doi={10.4153/CJM-2011-061-x},
}

\bib{Rosenberg-Schochet:Kunneth}{article}{
  author={Rosenberg, Jonathan},
  author={Schochet, Claude},
  title={The K\"unneth theorem and the universal coefficient theorem for equivariant \(\textup K\)\nobreakdash -theory and \(\textup {KK}\)-theory},
  journal={Mem. Amer. Math. Soc.},
  volume={62},
  date={1986},
  number={348},
  issn={0065-9266},
  review={\MR {0849938}},
  doi={10.1090/memo/0348},
}

\bib{Thevenaz-Webb:The_structure_of_Mackey_functors}{article}{
  author={Th{\'e}venaz, Jacques},
  author={Webb, Peter},
  issn={0002-9947},
  issn={1088-6850},
  doi={10.2307/2154915},
  review={\ZBmath {0834.20011}},
  title={The structure of Mackey functors},
  journal={Transactions of the American Mathematical Society},
  volume={347},
  number={6},
  pages={1865--1961},
  date={1995},
  publisher={American Mathematical Society (AMS), Providence, RI},
  eprint={infoscience.epfl.ch/record/130448},
}



\begin{bibdiv}
  \begin{biblist}
    \bibselect{references}
  \end{biblist}
\end{bibdiv}
\end{document}